\newtheorem{theorem}{Theorem}
\newtheorem{proposition}[theorem]{Proposition}
\newtheorem{remark}[theorem]{Remark}
\newtheorem{lemma}[theorem]{Lemma}
\newtheorem{assumption}[theorem]{Assumption}
\DeclareMathOperator*{\esssup}{ess\,sup}
\DeclareMathOperator*{\argmin}{arg\,min}
\newcommand{\norm}[2]{\left\lVert #1\right\rVert_{#2}}
\renewcommand{\div}{\operatorname{div}}
\newcommand{\E}{\mathbb{E}}
\newcommand{\pP}{\mathbb{P}}
\newcommand{\R}{\mathbb{R}}
\newcommand{\D}{\textup{ d}}
\newcommand{\Kd}{\mathfrak{K}^d}
\newcommand{\dH}{d_\mathfrak{H}}
\newcommand{\gG}{\boldsymbol{\Upsilon}}
\newcommand{\Sabm}{\mathcal{S}_\Gamma}
\newcommand{\Kabm}{\mathcal{K}_\Gamma}
\begin{document}

\title{Two-norm discrepancy and convergence\\ 
of the stochastic gradient method\\
with application to shape optimization}


\author{Marc Dambrine%
\thanks{Universit\'e de Pau et des Pays de l'Adour,
IPRA-LMA, UMR CNRS 5142, Avenue de l'universit\'e, 64000 Pau, France
(\texttt{marc.dambrine@univ-pau.fr})}
\and
Caroline Geiersbach%
\thanks{Weierstrass Institute for Applied Analysis and Stochastics,
Mohrenstr.~39, 10117 Berlin, Germany (\texttt{geiersbach@wias-berlin.de})}
\and
Helmut Harbrecht%
\thanks{Department of Mathematics and Computer Science, 
University of Basel, Spiegelgasse 1, 4051 Basel, Switzerland
(\texttt{helmut.harbrecht@unibas.ch})}
}

\date{\today}

\maketitle

\begin{abstract}
The present article is dedicated to proving convergence 
of the stochastic gradient method in case of random shape 
optimization problems. To that end, we consider Bernoulli's 
exterior free boundary problem with a random interior boundary. 
We recast this problem into a shape optimization problem by means 
of the minimization of the expected Dirichlet energy. By restricting 
ourselves to the class of convex, sufficiently smooth domains 
of bounded curvature, the shape optimization problem becomes 
strongly convex with respect to an appropriate norm. Since this 
norm is weaker than the differentiability norm, we are confronted 
with the so-called two-norm discrepancy, a well-known phenomenon from 
optimal control. We therefore need to adapt the convergence theory 
of the stochastic gradient method to this specific setting
correspondingly. The theoretical findings are supported 
and validated by numerical experiments.
\end{abstract}

\section{Introduction}
Shape optimization under uncertainty is a topic of growing 
interest, see for example \cite{AD14,AD15,BCH20,Rumpf2009,DK24,MF18} 
and the references therein. The most common approach is the
minimization of the expectation of the shape functional. In
specific cases, this problem can be reformulated as a deterministic 
one, see e.g.~\cite{DDH15,DHP19}. However, this is not possible
in general, which makes the shape optimization algorithm quite 
costly. One popular approach for the minimization of the expectation 
is offered by the stochastic gradient method, which originated in 
\cite{Robbins1951} and has been used in recent years in the optimal 
control of partial differential equations involving uncertain inputs 
or parameters; see, e.g., \cite{Geiersbach2019a, martin2021complexity}. 
In the present article, we intend to verify the convergence of this 
method in case of \emph{Bernoulli's exterior free boundary 
problem\/} in case of a \emph{random interior boundary\/}. 
Bernoulli's exterior free boundary problem is an overdetermined 
boundary value problem for the Laplacian, where one has an 
inclusion with Dirichlet boundary condition and an exterior,
\emph{free boundary\/} with Dirichlet and Neumann boundary 
condition. This free boundary problem becomes random when we 
assume that the interior boundary is random.

The aforementioned random free boundary problem has already 
been considered in several articles in different settings by 
some of the authors of this article. Bernoulli's free boundary 
problem can be seen as a ``fruit fly'' of shape optimization,
see \cite{DHPP17,DambrineHarbrechtPuig,HP17}. In particular, 
much is known about existence and regularity of the solution 
to the free boundary problem in the deterministic setting, 
see e.g.~\cite{ACA,beurling,EP06,HenSha97,tepper} for some of 
such results. If we restrict ourselves, for example, to starlike 
domains and the interior boundary $\Sigma_1$ lies within the
boundary $\Sigma_2$, then the solution $\Gamma_1$ of the free
boundary problem for $\Sigma_1$ lies within the solution 
$\Gamma_2$ for $\Sigma_2$. This important monotonicity 
property helps to ensure well-posedness in case 
of randomness.

The mathematical formulation of Bernoulli's free boundary 
problem with a random interior boundary is given in 
Section~\ref{sct:problem}. We choose the Dirichlet boundary 
value problem as the state equation and reformulate the problem 
under consideration as a shape optimization problem for the 
state's Dirichlet energy. That way, a variational formulation 
of the desired Neumann boundary condition at the free boundary 
is derived. We then intend to minimize the mean of the energy
functional. Although the problem under consideration is well-posed 
in the deterministic case (see e.g.~\cite{EP06,eppler2007convergence}), 
the present random shape optimization problem is not, in 
general. This fact is motivated in Section~\ref{sec:analex} 
by an analytical example with circular boundaries. Indeed, 
the random interior boundary has to lie almost surely within 
some sufficiently narrow concentric annulus to ensure that
the sought free boundary does not intersect this annulus,
which would imply a degenerated situation. For the sake of 
simplicity, we will consider circular annuli throughout 
the rest of this article.

Section~\ref{sct:shape calculus} is then concerned with shape 
calculus in the case of the deterministic free boundary problem.
We provide the shape gradient and shape Hessian of the energy 
functional under consideration for general boundaries. Then,
we study the convexity of the shape optimization problem under
consideration. We are able to prove $H^{1/2}$-convexity for all
convex exterior boundaries that are sufficiently smooth, lie 
in a fixed annulus, and have a uniformly bounded curvature. This
is the one of the main results of our article and the key to 
verifying convergence of the iterates of stochastic gradient 
method. Additionally, to the best of our knowledge, global 
convexity has never been derived in shape optimization for 
a specific problem before.

In Section~\ref{sec:SGM-2norm}, we prove convergence of the 
stochastic gradient method in a novel setting, namely, one 
involving the so-called \emph{two-norm discrepancy}. The
two-norm discrepancy is a well-known phenomenon in optimal 
control and may occur in the infinite-dimensional setting 
since not all norms are equivalent; see in particular 
\cite{ioffe1979necessary,Casas2012a}. We note that, for 
shape optimization, convergence of approximation (deterministic) 
solutions with the two-norm discrepancy was already established 
in \cite{eppler2007convergence}. There, second-order sufficient 
conditions were used to ensure stability around a local optimum. 
Since we have in fact strong convexity for the free boundary 
problem, we are able to prove convergence to the unique minimum, 
even in the presence of uncertainty. This is a stronger result 
than can be expected in a typical shape optimization problem under 
uncertainty; we note that convergence of the stochastic gradient 
method was shown in the context of Riemannian manifolds in 
\cite{Geiersbach2021}. Due to the (geodesic) nonconvexity of the 
unconstrained problem studied there, one can at most expect that 
the corresponding Riemannian gradient vanishes in the limit. The 
main difficulty in the analysis here is that the convexity for 
the energy functional is with respect to a weaker space than 
the one to which the exterior boundaries belong. We provide a complete proof of convergence of iterates 
to the unique solution with respect to the weaker norm in the almost sure sense. We explain why the typical convergence rates in expectation cannot be derived in the function space setting due to the two-norm discrepancy. On the other hand, the discretized sequence will yield the expected rates for strongly convex functions.

Numerical experiments are presented in Section~\ref{sec:numerix} 
in order to validate the theoretical findings. For a random 
starlike interior boundary, we compute the solution of the present
random version of Bernoulli's free boundary problem. We observe 
very fast convergence towards the correct shape of the sought 
free boundary, which is a huge improvement over previously 
studied methods such as the use of sampling methods to compute 
the expected shape functional and its gradient or the direct 
computation of an appropriate expectation of the free boundary.
In all, we observe a rate of convergence with respect to 
necessary optimality condition that is inverse proportional 
to the square root of the number of iterations. The rate of 
convergence with respect to the objective function values is 
inverse proportional to the number of iterations, as predicted 
by the theory.

Throughout this article, for $D\subset\mathbb{R}^2$ being a 
sufficiently smooth domain, we denote the space of square
integrable functions by $L^2(D)$. For a nonnegative real 
number $s\ge 0$, the associated Sobolev spaces are labelled 
by $H^s(D)\subset L^2(D)$. Especially, there holds $H^0(D) 
= L^2(D)$. Moreover, when $s\ge 1/2$, the respective Sobolev 
spaces on the boundary $\partial D$ are defined as the traces 
$H^{s-1/2}(\partial D) := \gamma(H^s(D))$ of the Sobolev spaces 
$H^s(D)$. The set of $k$-times differentiable functions is 
denoted by $\mathcal{C}^k$ and $\mathcal{C}^{k,\alpha}$ denotes 
the set of functions in $\mathcal{C}^k$ whose $k$-th order 
partial derivatives are additionally $\alpha$-H\"older continuous.

\section{Problem setting}\label{sct:problem}
Let $(\Omega,\mathcal{F},\mathbb{P})$ be a complete 
probability space. In this article, we consider Bernoulli's 
free boundary problem when the interior boundary is random. 
The precise meaning of the random boundary will be specified 
later on. Given an event $\omega\in\Omega$, we are thus looking 
for an annular domain $D = D(\omega)\subset\mathbb{R}^2$ with 
interior boundary $\Sigma = \Sigma(\omega)$ and unknown deterministic 
exterior boundary $\Gamma$ such that the function $u = u(\omega)\in 
H^1\big(D(\omega)\big)$ satisfies the following Dirichlet boundary 
value problem for the Laplacian
\begin{equation}
\label{bvp}
    \begin{aligned}
    \Delta u &= 0 \quad \text{in} \quad D(\omega) ,\\
    u &= 1 \quad \text{on} \quad \Sigma(\omega),\\
    u &= 0 \quad \text{on} \quad \Gamma,
    \end{aligned}
\end{equation}
with the additional flux condition 
\begin{equation}
\label{flux}
    -\frac{\partial u}{\partial\boldsymbol{n}} = \lambda \quad \text{on} \quad \Gamma,
\end{equation}
where $\lambda > 0$ is a given constant. Here and
in the following, $\boldsymbol{n} = \boldsymbol{n}(\omega)$ 
denotes the exterior unit normal to $D(\omega)$. Note that the 
geometrical setup is illustrated in Figure~\ref{fig:setup}.

\begin{figure}
\begin{center}
\includegraphics[width=0.4\textwidth]{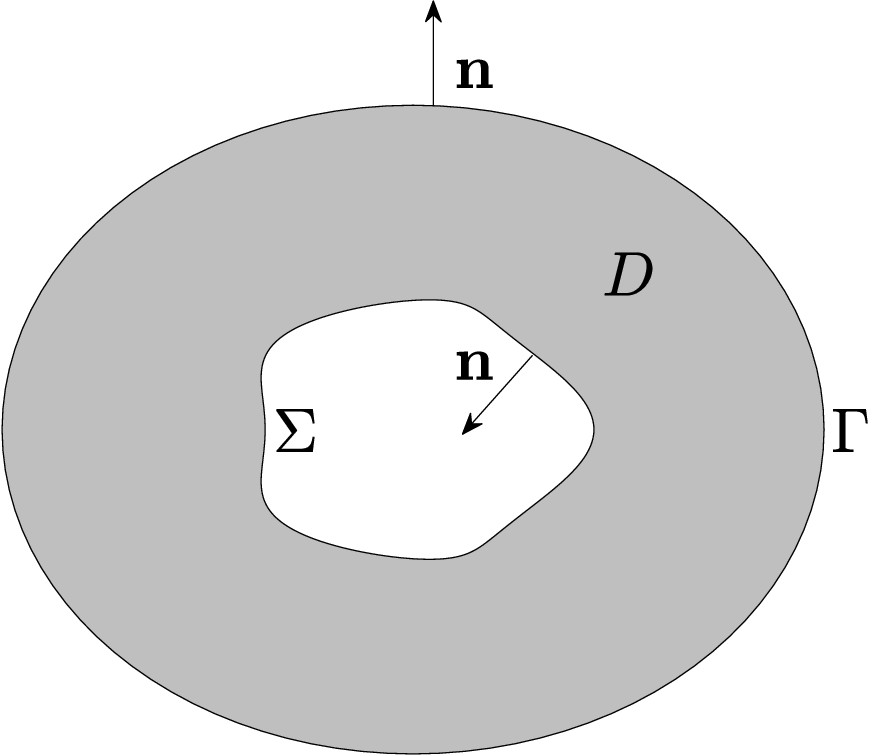}
\caption{\label{fig:setup}The geometrical setup:
The annular domain $D$ with the given interior boundary $\Sigma$
and the free exterior boundary $\Gamma$.}
\end{center}
\end{figure}

Bernoulli’s free boundary problem arises in many 
applications, for example in ideal fluid dynamics, 
optimal design, electrochemistry, or electrostatics.
Generally speaking, Bernoulli's free boundary problem is an 
overdetermined partial differential problem since, for fixed 
boundaries $\Sigma(\omega)$ and $\Gamma$, the unknown harmonic 
function $u = u(\omega)$ has to vanish at the outer boundary 
and also to satisfy a flux condition \eqref{flux}. However, 
it becomes solvable when the \emph{free boundary} $\Gamma$
is also considered as an unknown. We refer the reader to 
e.g.~\cite{ACA,Flucher,Friedman} and the references therein
for further details.

For any fixed realization $\Sigma(\omega)$ of the interior 
boundary, where $\omega\in\Omega$, it is well-known that a 
variational formulation for the sought boundary $\Gamma$
such that the overdetermined (deterministic) boundary value 
problem \eqref{bvp} and \eqref{flux} admits a solution is 
given by 
\begin{equation}\label{functional}
\underset{\Gamma \subset \R^2}{\textup{minimize}} \quad
    J(\Gamma,\Sigma(\omega)) = \int_{D(\omega)} \norm{\nabla u(\omega)}{2}^2 + \lambda^2 \D x
    = \int_{\Sigma(\omega)} \frac{\partial u}{\partial\boldsymbol{n}}\D s + \lambda^2 |D(\omega)|.
\end{equation}
where the state $u = u(\omega)$ is the solution to \eqref{bvp}.
Uniqueness and existence of solutions to this free 
boundary problem follows from the seminal work \cite{ACA}.

The free boundary defined as the minimizer of the shape 
optimization problem \eqref{functional} depends on the 
particular random event $\omega\in\Omega$. Therefore, 
in order to get a deterministic free boundary $\Gamma$ 
while accounting for all possibilities of $\Sigma(\cdot)$, 
we shall consider the minimization (with respect to the 
boundary $\Gamma$) of the expected functional
\begin{equation}\label{E[functional]}
\underset{\Gamma \subset \R^2}{\textup{minimize}} \quad
    \E[J(\Gamma,\Sigma(\cdot))] = \int_\Omega \int_{D(\omega)} 
    \norm{\nabla u(\omega)}{2}^2 + \lambda^2 \D x \D \pP(\omega)
\end{equation}
with the state $u = u(\omega)$ given by \eqref{bvp}.
Note that this is a free boundary problem, where the 
underlying domain $D$ is random. Although the pointwise
solution \eqref{functional} is well-defined, this does 
not necessarily hold true for \eqref{E[functional]}.
We motivate this fact in the next section.

\section{Analytical computations in the case of concentric annuli}
\label{sec:analex}
Calculations can be performed analytically 
if the interior boundary $\Sigma$ is a circle
around the origin with radius $r_\Sigma$. Then, 
due to symmetry, the free boundary $\Gamma$ 
will also be a circle around the origin with unknown 
radius $r_\Gamma$. 

Using polar coordinates and making the ansatz 
$u(r,\theta) = y(r)$, we find $\Delta u(r,\theta) = 
y''(r) + y'(r)/r$. Hence, the solution with respect 
to the prescribed Dirichlet boundary condition of 
\eqref{bvp} in the case of dimension two is given by
\[
  y(r) = \frac{\log\big(\frac{r}{r_\Gamma}\big)}
  	{\log\big(\frac{r_\Sigma}{r_\Gamma}\big)}.
\]
The desired Neumann boundary condition at 
the free boundary $r_\Gamma$ yields the equation 
\[
  -y'\big(r_\Gamma\big) = \frac{1}
  	{r_\Gamma\log\big(\frac{r_\Gamma}{r_\Sigma}\big)} = \lambda,
\]
which can be solved by means of Lambert's $W$-function:
\[
r_\Gamma = F(r_\Sigma) := \frac{1}{\lambda W\big(\frac{1}{\lambda r_\Sigma}\big)}.
\]
Let us recall that Lambert's $W$-function is the inverse of 
$x\mapsto x e^{x}$. It is a non-decreasing function on $(0,+\infty)$ 
which, however, provides a non-analytic expression.

Since the Neumann data of $u$ on the interior free
boundary $r_\Sigma$ are given by
\[
  -y'\big(r_\Sigma\big) = \frac{\partial u}{\partial \boldsymbol{n}}\Big|_{\Sigma}
    = \frac{1}{r_\Sigma\log\big(\frac{r_\Gamma}{r_\Sigma}\big)},
\]
we conclude
\[
  J(r_\Gamma,r_\Sigma) = \frac{2\pi}{\log\big(\frac{r_\Gamma}{r_\Sigma}\big)}
    + \pi \lambda^2 (r_\Gamma^2-r_\Sigma^2),
\]
compare \eqref{functional}. One readily verifies that 
this functional has indeed the unique minimizer 
$r_\Gamma = F(r_\Sigma)$.

Let us next consider the case where $r_\Sigma$ switches
randomly between $r_{\Sigma,1}$ and $r_{\Sigma,2}$ with
the probability $\mathbb{P}(r_\Sigma=r_{\Sigma,1}) = p$
and $\mathbb{P}(r_\Sigma=r_{\Sigma,2}) = 1-p$, where
$p\in [0,1]$. If we choose $r_{\Sigma,2}$ such that it
satisfies $r_{\Sigma,2} > F(r_{\Sigma,1})$, then we
obviously obtain the inequality chain
\begin{equation}\label{inequality}
 0 < r_{\Sigma,1} < F(r_{\Sigma,1}) < r_{\Sigma,2} < F(r_{\Sigma,2}) < \infty.
\end{equation}
The expected functional reads 
\[
  \E[J(r_\Gamma,r_\Sigma(\cdot))] 
    = \frac{2\pi p}{\log\big(\frac{r_\Gamma}{r_{\Sigma,1}}\big)}
    + \frac{2\pi(1-p)}{\log\big(\frac{r_\Gamma}{r_{\Sigma,2}}\big)}
    + \pi \lambda^2 \big(r_\Gamma^2-p r_{\Sigma,1}^2 - (1-p) r_{\Sigma,2}^2\big).
\]
Its unique minimizer is $r_\Gamma = 
F(r_{\Sigma,1})$ if $p = 1$ while it is 
$r_\Gamma = F(r_{\Sigma,2})$ if $p = 0$. In view 
of \eqref{inequality}, this means that $r_\Gamma$ 
has to cross $r_{\Sigma,2}$ during the transition 
from $p = 0$ to $p = 1$. However, this is impossible
since then the domain $D(\omega)$ is not well-defined 
anymore as $r_\Sigma(\omega)< r_\Gamma$ is violated.
Therefore, it is required to impose an inequality 
constraints to the sought boundary $r_\Gamma$, 
demanding that $r_\Gamma \ge \delta + r_\Sigma(\omega)$
for $\pP$-almost all $\omega\in\Omega$ (``almost surely'') and some $\delta > 0$.

The above observations motivate the assumption that
$\Sigma(\omega)$ lies inside some annulus such that
\begin{equation}\label{eq:interior_bound}
  B(0,\underline{r}_\Sigma)\subset\Sigma(\omega)\subset B(0,\overline{r}_\Sigma)
  \quad\text{almost surely}.
\end{equation}
Thus, it follows from \cite{beurling,tepper}
that the resulting free exterior boundary $\Gamma 
= \Gamma(\omega)$ satisfies
\[
  B(0,\underline{r}_\Gamma)\subset\Gamma(\omega)\subset B(0,\overline{r}_\Gamma)
  \quad\text{almost surely},
\]
provided that the interior domain surrounded by
$\Sigma(\omega)$ is starlike. In order to ensure 
well-posedness in our subsequent analysis, we restrict 
ourselves to starlike interior boundaries satisfying
\eqref{eq:interior_bound}, where $\underline{r}_\Sigma$
and $\overline{r}_\Sigma$ are such that 
$\overline{r}_\Sigma\le\underline{r}_\Gamma$.

\section{Properties of the objective with respect 
to shape variations}\label{sct:shape calculus}
We shall now focus on the particular case where the 
free boundary $\Gamma$ is the boundary of a convex domain. 
Indeed, it is known that the solution to Bernoulli's free 
boundary problem is starlike if the interior boundary is; see
\cite{tepper}. If the interior boundary is even convex, then the
exterior one is also convex \cite{HenSha97}. However, the
exterior boundary can also be convex although the interior is not.

Our main result in this section is the convexity 
of the objective with respect to an appropriate norm. Of
course, since shape spaces are not linear, convexity has to be
understood in terms of a parameterization of the boundary
$\Gamma$. Among the many ways to parameterize such a curve, we discuss
the case of the parameterization with respect to a point, then with
the support function. Since they do not recover exactly the same
geometric perturbations, the respective second-order derivative
has different properties. 

\subsection{Shape sensitivity analysis}
We first consider general geometries and perturbations 
and we compute the first and second order shape derivatives 
of the objective 
around a given boundary $\Gamma$ 
for general perturbations. To this end, we choose the interior
boundary $\Sigma$ arbitrary but fixed and suppress its explicit 
dependence in the objective $J$. Throughout this section,
the outer boundary $\Gamma$ is always such that it encloses the
interior boundary $\Sigma$ to ensure that the annular domain
$D$ in between is well-defined.

We first study the dependence with respect to the outer 
boundary $\Gamma$ and consider sufficiently regular 
deformation fields $\boldsymbol{V}$ that are defined
in the neighborhood of the exterior boundary $\Gamma$.
As we need regularity on the shapes, let us assume for 
convenience that $\Gamma$ is of class $\mathcal{C}^2$ and 
that the deformation field $\boldsymbol{V}$ has the same 
regularity. While the first-order derivative of the shape 
functional under consideration has already been calculated 
and used many times in the literature, the second-order
derivative has only been studied at a critical point for
stability analyses (see \cite{EP06}, for example). Here, 
we will study its expression for domains $D$ where the 
gradient does not vanish. For a
comprehensive introduction to shape calculus, we refer the 
reader to \cite{DEZ,HenrotPierre,SOZ}.

\begin{lemma}
\label{lemshapederivatives}
Let the boundaries $\Gamma$ and $\Sigma$ be of class 
$\mathcal{C}^2$. Then, the first- and second-order shape
derivatives for of the objective are given by
\begin{equation}
\label{shapegradient}
    D J(\Gamma)[\boldsymbol{V}] = \int_\Gamma \boldsymbol{V}_{\boldsymbol{n}}
    \left[ \lambda^2-\left(\frac{\partial u}{\partial\boldsymbol{n}}\right)^2\right]\D s
\end{equation}
and
\begin{equation}
    \label{shapehessian}
D^2 J(\Gamma)[\boldsymbol{V},\boldsymbol{V}] 
= \int_\Gamma \frac{\partial u'}{\partial\boldsymbol{n}} u' 
+ H  \lambda^2 \boldsymbol{V}_{\boldsymbol{n}}^2 
+ \left(\frac{\partial u}{\partial\boldsymbol{n}}\right)^2 
\boldsymbol{V}\cdot\nabla_{\boldsymbol{\tau}} \boldsymbol{V}_{\boldsymbol{n}} \D s,
\end{equation}
where we used the abbreviation $\boldsymbol{V}_{\boldsymbol{n}} 
= \boldsymbol{V}\cdot\boldsymbol{n}$   
and where 
$\nabla_{\boldsymbol{\tau}}$ 
denotes the surface gradient with
respect to the boundary $\Gamma$.
\end{lemma}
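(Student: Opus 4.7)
The plan is to apply the velocity (speed) method of shape calculus. The first step is to identify the shape derivative $u'$ of the state: since $\Sigma$ is held fixed while $\Gamma$ flows along $\boldsymbol{V}$ and since $u\equiv 0$ on the moving boundary $\Gamma$, differentiating this identity along the flow shows that $u'$ is the unique $H^1(D)$-solution of
\begin{equation*}
    \lap u' = 0 \text{ in } D,\qquad u' = 0 \text{ on } \Sigma,\qquad u' = -\boldsymbol{V}_{\boldsymbol{n}}\frac{\partial u}{\partial \boldsymbol{n}} \text{ on } \Gamma.
\end{equation*}

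For the first-order formula \eqref{shapegradient}, I would apply Reynolds' transport formula to $J(\Gamma)=\int_D(|\grad u|^2+\lambda^2)\,\D x$, splitting the derivative into the volume term $2\int_D \grad u\cdot\grad u'\,\D x$ plus the boundary term $\int_\Gamma(|\grad u|^2+\lambda^2)\boldsymbol{V}_{\boldsymbol{n}}\,\D s$. Integrating by parts, using $\lap u=0$ together with the boundary values of $u$ and $u'$, rewrites the volume contribution as $-2\int_\Gamma\boldsymbol{V}_{\boldsymbol{n}}(\partial_{\boldsymbol{n}}u)^2\,\D s$; combined with $|\grad u|^2=(\partial_{\boldsymbol{n}}u)^2$ on $\Gamma$ (since $u|_\Gamma=0$), this yields \eqref{shapegradient}. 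A slightly faster route is to first note, via Green's identity, that $\int_D|\grad u|^2\,\D x=\int_\Sigma \partial_{\boldsymbol{n}}u\,\D s$, and then exploit that $\Sigma$ does not move.

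For the Hessian \eqref{shapehessian}, I would differentiate the boundary integral \eqref{shapegradient} once more along $\boldsymbol{V}$, using the standard identity
\begin{equation*}
    \frac{d}{dt}\int_{\Gamma_t}f_t\,\D s\bigg|_{t=0}=\int_\Gamma f'+\bigl(\partial_{\boldsymbol{n}}f+Hf\bigr)\boldsymbol{V}_{\boldsymbol{n}}\,\D s
\end{equation*}
applied to $f=\boldsymbol{V}_{\boldsymbol{n}}[\lambda^2-(\partial_{\boldsymbol{n}}u)^2]$. Three types of contributions then appear: (i) the shape derivative of $\partial_{\boldsymbol{n}}u$ on $\Gamma$, which after substituting the Dirichlet trace $u'|_\Gamma=-\boldsymbol{V}_{\boldsymbol{n}}\partial_{\boldsymbol{n}}u$ produces the term $\int_\Gamma(\partial u'/\partial\boldsymbol{n})\,u'\,\D s$; (ii) the mean-curvature term $H\lambda^2\boldsymbol{V}_{\boldsymbol{n}}^2$ stemming from the variation of the surface element (with the analogous $H(\partial_{\boldsymbol{n}}u)^2\boldsymbol{V}_{\boldsymbol{n}}^2$ piece cancelling against a normal-derivative contribution); and (iii) the tangential term $(\partial_{\boldsymbol{n}}u)^2\,\boldsymbol{V}\cdot\nabla_{\boldsymbol{\tau}}\boldsymbol{V}_{\boldsymbol{n}}$, which emerges because $\boldsymbol{V}_{\boldsymbol{n}}(t)=\boldsymbol{V}\cdot\boldsymbol{n}(t)$ depends on the moving unit normal $\boldsymbol{n}(t)$.

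The main obstacle lies in this last bookkeeping step. Unlike the first derivative, the second derivative is not purely a function of $\boldsymbol{V}_{\boldsymbol{n}}$, in agreement with the Hadamard structure theorem at second order, and it is precisely this non-autonomous dependence that produces the term $\boldsymbol{V}\cdot\nabla_{\boldsymbol{\tau}}\boldsymbol{V}_{\boldsymbol{n}}$. Accurately tracking how the unit normal and the normal trace $\partial_{\boldsymbol{n}}u$ transform under the flow, and verifying the cancellations of the extra curvature and tangential contributions so that only the three clean terms in \eqref{shapehessian} remain, is the delicate part of the argument.
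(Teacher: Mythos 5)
Your identification of the boundary value problem for $u'$ and your derivation of the first-order formula \eqref{shapegradient} match the standard argument the paper invokes: Hadamard/Reynolds differentiation of the volume integral, Green's identity, and $|\nabla u|^2=(\partial_{\boldsymbol{n}}u)^2$ on $\Gamma$. For the Hessian, however, you take a genuinely different route from the paper, and it is the route where all the danger lies. The paper never differentiates the boundary-integral form of the gradient. Instead it writes down the boundary value problem for the \emph{second} shape derivative $u''$ (harmonic, vanishing on $\Sigma$, with $u''=-\partial_{\boldsymbol{n}}u'\,\boldsymbol{V}_{\boldsymbol{n}}+\partial_{\boldsymbol{n}}u\,\boldsymbol{V}\cdot\nabla_{\boldsymbol{\tau}}\boldsymbol{V}_{\boldsymbol{n}}$ on $\Gamma$, obtained by differentiating the Dirichlet condition twice and using $\boldsymbol{n}'=-\nabla_{\boldsymbol{\tau}}\boldsymbol{V}_{\boldsymbol{n}}$, whence $\nabla u\cdot\boldsymbol{n}'=0$), starts from $D^2J(\Gamma)[\boldsymbol{V},\boldsymbol{V}]=\int_\Gamma u''\,\partial_{\boldsymbol{n}}u+H\lambda^2\boldsymbol{V}_{\boldsymbol{n}}^2\,\D s$, substitutes the trace of $u''$, and converts $\int_\Gamma\partial_{\boldsymbol{n}}u'\,(-\partial_{\boldsymbol{n}}u\,\boldsymbol{V}_{\boldsymbol{n}})\,\D s$ into $\int_\Gamma\partial_{\boldsymbol{n}}u'\,u'\,\D s$ using the boundary value of $u'$. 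That is four lines of algebra with no delicate cancellations.

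The gap in your proposal is that the step you yourself flag as ``the delicate part'' is exactly where the proof lives, and as sketched it does not obviously close. Applying your surface-differentiation identity to $f=\boldsymbol{V}_{\boldsymbol{n}}[\lambda^2-(\partial_{\boldsymbol{n}}u)^2]$, the local shape derivative of $(\partial_{\boldsymbol{n}}u)^2$ alone already produces $-2\boldsymbol{V}_{\boldsymbol{n}}\partial_{\boldsymbol{n}}u\,\partial_{\boldsymbol{n}}u'=2\,\partial_{\boldsymbol{n}}u'\,u'$, i.e.\ \emph{twice} the first term of \eqref{shapehessian}; recovering the correct coefficient requires compensating contributions from $\partial_{\boldsymbol{n}}f\,\boldsymbol{V}_{\boldsymbol{n}}$ (using $\partial_{\boldsymbol{n}\boldsymbol{n}}u=-H\partial_{\boldsymbol{n}}u$ on $\Gamma$), from the curvature term, and from the fact that for the perturbation of identity $\tfrac{\D}{\D t}\,DJ(\Gamma_t)[\boldsymbol{V}]$ is not the same object as $D^2J(\Gamma)[\boldsymbol{V},\boldsymbol{V}]$ because the transported velocity field is not $\boldsymbol{V}$ itself. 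None of these cancellations is verified in your write-up; they are asserted. So either carry out that bookkeeping explicitly (it can be done, but it is substantially longer than the statement suggests), or switch to the paper's strategy: derive the boundary condition for $u''$ and deduce \eqref{shapehessian} by substitution and one application of Green's identity.
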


\begin{proof}
By usual arguments given in e.g.~\cite{DEZ,HenrotPierre,SOZ},
the solution $u$ of \eqref{bvp} has the following 
first- and second-order derivatives $u'$ and $u''$ 
that are characterized by differentiating the boundary 
condition on the boundary $\Gamma$:
\[
 \begin{aligned}
    \Delta u' &= 0 \quad \text{in} \quad D ,\\
    u' &= 0 \quad \text{on} \quad \Sigma,\\
    u' &= - \frac{\partial u}{\partial\boldsymbol{n}} \boldsymbol{V}_{\boldsymbol{n}}\quad \text{on} \quad \Gamma,
\end{aligned}
\]
and
\[
 \begin{aligned}
    \Delta u'' &= 0 \quad \text{in} \quad D ,\\
    u'' &= 0 \quad \text{on} \quad \Sigma,\\
    u'' &= - \left[\frac{\partial u'}{\partial\boldsymbol{n}}
    +\nabla u \cdot\boldsymbol{n}'\right]  \boldsymbol{V}_{\boldsymbol{n}}
    -\frac{\partial u}{\partial\boldsymbol{n}} \boldsymbol{V}\cdot \boldsymbol{n}'\\
    &=-\frac{\partial u'}{\partial\boldsymbol{n}}\boldsymbol{V}_{\boldsymbol{n}} 
    + \frac{\partial u}{\partial\boldsymbol{n}}\boldsymbol{V}
    \cdot\nabla_{\boldsymbol{\tau}} (\boldsymbol{V}_{\boldsymbol{n}})\quad \text{on} \quad \Gamma,
    \end{aligned}
\]
where we used that $\boldsymbol{n}'=-\nabla_{\boldsymbol\tau} (\boldsymbol{V}_{\boldsymbol{n}})$, 
hence $\nabla u\cdot\boldsymbol{n}'=0$. Therefore, we immediately
arrive at
\begin{equation*}
    D J(\Gamma)[\boldsymbol{V}] 
    = \int_\Gamma\boldsymbol{V}_{\boldsymbol{n}} \left[ \lambda^2
    -\left(\frac{\partial u}{\partial\boldsymbol{n}}\right)^2 \right] \D s
\end{equation*}
and 
\begin{equation*}
 \begin{aligned}
    D^2 J(\Gamma)[\boldsymbol{V},\boldsymbol{V}] 
    &= \int_\Gamma u'' \frac{\partial u}{\partial \boldsymbol{n}} 
    + H  \lambda^2 \boldsymbol{V}_{\boldsymbol{n}}^2 \D s \\
    &=  \int_\Gamma  \left( -\frac{\partial u'}{\partial\boldsymbol{n}}  
    \boldsymbol{V}_{\boldsymbol{n}} + \frac{\partial u}{\partial \boldsymbol{n}} 
    \boldsymbol{V}\cdot\nabla_{\boldsymbol{\tau}} (\boldsymbol{V}_{\boldsymbol{n}})\right)
    \frac{\partial u}{\partial \boldsymbol{n}} +H  \lambda^2 \boldsymbol{V}_{\boldsymbol{n}}^2 \D s\\
    &= \int_\Gamma \frac{\partial u'}{\partial \boldsymbol{n}} 
    \left( - \frac{\partial u}{\partial\boldsymbol{n}}\boldsymbol{V}_{\boldsymbol{n}} \right) 
    + H  \lambda^2 \boldsymbol{V}_{\boldsymbol{n}}^2 + \left(\frac{\partial u}{\partial \boldsymbol{n}}\right)^2 
    \boldsymbol{V}\cdot\nabla_{\boldsymbol{\tau}} (\boldsymbol{V}_{\boldsymbol{n}}) \D s\\
    &=\int_\Gamma \frac{\partial u'}{\partial\boldsymbol{n}} u' 
    + H  \lambda^2 \boldsymbol{V}_{\boldsymbol{n}}^2  \D s+ \int_\Gamma  
    \left(\frac{\partial u}{\partial \boldsymbol{n}}\right)^2 
    \boldsymbol{V}\cdot\nabla_{\boldsymbol{\tau}} \boldsymbol{V}_{\boldsymbol{n}}  \D s.
\end{aligned}  
\end{equation*}
\end{proof} 

\subsection{On the sign of the shape Hessian}
In order to the study the sign of the shape Hessian, we split it 
into two terms
$$
  D^2 J(\Gamma)[\boldsymbol{V},\boldsymbol{V}]
   = I_1(\boldsymbol{V}) + I_2(\boldsymbol{V}), 
$$
where we set
$$
I_1(\boldsymbol{V})=\int_\Gamma \frac{\partial u'}{\partial\boldsymbol{n}} u' 
+ H\lambda^2 \boldsymbol{V}_{\boldsymbol{n}}^2 \D s
\quad\text{and}\quad
I_2(\boldsymbol{V})=\int_\Gamma\left(\frac{\partial u}{\partial\boldsymbol{n}}\right)^2 
\boldsymbol{V}\cdot\nabla_{\boldsymbol\tau} \boldsymbol{V}_{\boldsymbol{n}} \D s.
$$
Notice that the term
$\boldsymbol{V}\cdot\nabla_{\boldsymbol{\tau}}
\boldsymbol{V}_{\boldsymbol{n}}$ appears in the former expressions 
as expected by the structure theorems of second order shape 
derivatives (see \cite[Theorem 5-9-2, page 220]{HenrotPierre} and 
\cite[Theorem 2-1]{DambrineLamboley}) since we are not at the optimum 
and because we do not restrict ourselves to normal perturbations. 
According to the structure of the shape Hessian, the first term 
$I_1$ is a quadratic form in $\boldsymbol{V}_{\boldsymbol{n}}$. 
The second term $I_2$ is a remainder of the shape gradient. It 
is bilinear in $\boldsymbol{V}$ but also involves tangential 
derivatives. Consequently, finding the sign of the shape
Hessian requires studying the two terms separately.
\subsubsection{On the sign of $\boldsymbol{I_1}$}
Recall the we assume that $\Gamma$ is the boundary of 
a convex set. Hence, its curvature $H$ is nonnegative and 
we obtain after integration by parts 
$$
I_1(\boldsymbol{V})=\int_\Gamma \frac{\partial u'}{\partial\boldsymbol{n}} u' 
+ H  \lambda^2 \boldsymbol{V}_{\boldsymbol{n}}^2  \D s
\geq \int_\Gamma \frac{\partial u'}{\partial\boldsymbol{n}} u'  \D s
= \int_D \lVert \nabla u'\rVert_2^2  \D x>0,
$$
i.e., the integral $I_1$ is clearly positive. Since $u'=0$ 
on the component $\Sigma$ of the boundary of $D$, we then 
get by Poincaré's inequality and the trace theorem that 
\begin{equation}
    \label{estimation:coercivite:I_1}
I_1(\boldsymbol{V})= \int_D \|\nabla u'\|_2^2 \D x \geq C_P(D) \|u'\|_{H^{1/2}(\Gamma)}^2 ,  
\end{equation}
where $C_P(D)$ is the Poincaré constant of the domain 
$D$ with homogeneous boundary condition on $\Sigma$. 
Abbreviating $\partial_{\boldsymbol{n}} u = (\partial u)
/(\partial \boldsymbol{n})$, an immediate first lower bound 
is thus
$$
I_1(\boldsymbol{V})\geq C_P(D)\|u'\|_{L^2(\Gamma)}^2=C_P(D)  
\| (\partial_{\boldsymbol{n}} u) \boldsymbol{V}_{\boldsymbol{n}}\|_{L^2(\Gamma)}^2
\geq C_P(D)  (\inf_\Gamma \partial_{\boldsymbol{n}} u)^2 
\| \boldsymbol{V}_{\boldsymbol{n}}\|_{L^2(\Gamma)}^2.
$$
Here, we have used the strong maximum principle to ensure that 
$\inf_\Gamma \partial_{\boldsymbol{n}} u>0$. 

In fact, we can have a more precise lower bound in the Sobolev norm 
$H^{1/2}(\Gamma)$. To that end, we use the following lemma to estimate 
the Sobolev norm of the product $u'=- (\partial_{\boldsymbol{n}} u) 
\boldsymbol{V}_{\boldsymbol{n}}$ from below.

\begin{lemma}
If $f\in H^{1/2}(\Gamma)$ and $g\in H^{1/2+\epsilon}(\Gamma)$, 
then there exists some $C>0$ such that
\begin{equation}
\label{estimate:product:H^1/2(surface)}
\|fg\|_{H^{1/2}(\Gamma)} 
\leq  C \|f\|_{H^{1/2}(\Gamma)} \ \|g\|_{H^{1/2+\epsilon}(\Gamma)}.
\end{equation}
If there exists some $a>0$ such that $g\geq a$ on $\Gamma$, then
there exists some $C>0$ such that
\begin{equation}
    \label{minoration:produit:H^1/2(surface)}
\|f\|_{H^{1/2}(\Gamma)} \leq C \|fg\|_{H^{1/2}(\Gamma)}
    \ \|1/g\|_{H^{1/2+\epsilon}(\Gamma)}. 
\end{equation}
\end{lemma}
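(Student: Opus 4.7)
The plan is to establish the product estimate \eqref{estimate:product:H^1/2(surface)} by an interpolation argument, and then to deduce the inverse estimate \eqref{minoration:produit:H^1/2(surface)} as an immediate consequence.

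For \eqref{estimate:product:H^1/2(surface)}, I would view multiplication by $g$ as a linear operator $T_g\colon f\mapsto fg$ and bound it at two endpoints. Since $\Gamma$ is a smooth closed curve of dimension one and $1/2+\epsilon>1/2=d/2$, the Sobolev embedding gives $H^{1/2+\epsilon}(\Gamma)\hookrightarrow L^\infty(\Gamma)$, so $T_g$ maps $L^2(\Gamma)$ into itself with operator norm bounded by $C\|g\|_{H^{1/2+\epsilon}(\Gamma)}$. Because $1/2+\epsilon$ exceeds the critical Sobolev exponent $d/2=1/2$ on a $1$-manifold, $H^{1/2+\epsilon}(\Gamma)$ is moreover a Banach algebra, so $T_g$ also maps $H^{1/2+\epsilon}(\Gamma)$ into itself with the same order of bound. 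Complex interpolation between these two endpoints at $\theta=1/(1+2\epsilon)$ identifies the intermediate space as $H^{1/2}(\Gamma)$ and delivers \eqref{estimate:product:H^1/2(surface)} with a constant depending only on $\|g\|_{H^{1/2+\epsilon}(\Gamma)}$.

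For \eqref{minoration:produit:H^1/2(surface)}, the pointwise lower bound $g\geq a>0$ combined with a standard Nemytskii/Moser-type composition estimate shows that $1/g$ still lies in $H^{1/2+\epsilon}(\Gamma)$. Applying the already established \eqref{estimate:product:H^1/2(surface)} to the pair $(fg,1/g)$ and using $(fg)(1/g)=f$ pointwise yields the desired inequality.

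The main obstacle will be to justify the algebra property of $H^{1/2+\epsilon}(\Gamma)$ on a curve and to execute the interpolation cleanly while preserving the precise dependence of the constant on $\|g\|_{H^{1/2+\epsilon}(\Gamma)}$. An alternative elementary route would start from the Gagliardo seminorm and the split $f(x)g(x)-f(y)g(y)=(f(x)-f(y))g(y)+f(x)(g(x)-g(y))$: the first piece is immediately controlled by $\|g\|_{L^\infty(\Gamma)}[f]_{H^{1/2}(\Gamma)}$, but a naive pointwise Hölder bound on $|g(x)-g(y)|$ is not integrable against the Gagliardo kernel $|x-y|^{-2}$ on a $1$-manifold, which forces one to resort to a Kato--Ponce fractional Leibniz rule or to a paraproduct decomposition $fg=T_f g+T_g f+R(f,g)$ in order to close the estimate.
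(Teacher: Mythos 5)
Your proof is correct, but it follows a genuinely different route from the paper's. You work intrinsically on the curve $\Gamma$: you bound the multiplication operator $T_g$ at the two endpoints $L^2(\Gamma)\to L^2(\Gamma)$ (via the Sobolev embedding $H^{1/2+\epsilon}(\Gamma)\hookrightarrow L^\infty(\Gamma)$, valid since $1/2+\epsilon>d/2$ with $d=1$) and $H^{1/2+\epsilon}(\Gamma)\to H^{1/2+\epsilon}(\Gamma)$ (via the Banach algebra property above the critical exponent), and then interpolate to reach $H^{1/2}(\Gamma)$; the exponent $\theta=1/(1+2\epsilon)$ is the right one and the resulting operator norm is indeed linear in $\|g\|_{H^{1/2+\epsilon}(\Gamma)}$. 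The paper instead lifts $f$ and $g$ to their harmonic extensions $F,G$ on the annulus $D$, invokes a known \emph{volume} product estimate $\|FG\|_{H^1(D)}\leq C\|F\|_{H^1(D)}\|G\|_{H^{1+\epsilon}(D)}$ (Behzadan--Holst), and then uses that the trace norm of $fg$ is the infimum of $\|\phi\|_{H^1(D)}$ over all extensions $\phi$ of $fg$, hence is dominated by $\|FG\|_{H^1(D)}$. Your argument is more self-contained on the boundary and avoids citing the volume estimate, at the cost of justifying the algebra property and the interpolation identity $[L^2(\Gamma),H^{1/2+\epsilon}(\Gamma)]_\theta=H^{1/2}(\Gamma)$ on a closed curve (both standard for a smooth compact $1$-manifold). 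The paper's extension route is what later lets the authors track \emph{uniformity} of the constant over a compact class of domains, via uniform extension operators and Gagliardo's equivalence of trace and intrinsic norms, which would require a separate argument in your intrinsic setup. For the second inequality both proofs are essentially identical: the paper composes $g$ with the explicit bounded Lipschitz function $I_a(t)=1/\max(t,a)$ to get $1/g\in H^{1/2+\epsilon}(\Gamma)$ (your Nemytskii-type step) and then applies the first estimate to the factorization $f=(fg)\,(1/g)$.
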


\begin{proof}
The key ingredient is the following product estimate 
in Sobolev spaces taken from \cite[Lemma 7-2]{BehzadanHolst}: 
If $F\in H^1(D)$ and $G\in H^{1+\epsilon}(D)$ for some 
$\epsilon>0$, then the product satisfies $FG \in H^1(D)$
and we have
\begin{equation}
\label{estimate:product:H^1(volume)}
\|FG\|_{H^1(D)} \leq C \|F\|_{H^1(D)} \|G\|_{H^{1+\epsilon}(D)}.
\end{equation}

We now translate this estimate to the trace space on 
the boundary $\Gamma$. To this end, set $f\in H^{1/2}(\Gamma)$ 
and $g\in H^{1/2+\epsilon}(\Gamma)$. Let $F$ and $G$ be harmonic 
extensions of $f$ and $g$ to $D$ so that by Dirichlet's principle 
$\|f\|_{H^{1/2}(\Gamma)} = \|F\|_{H^1(D)}$ and 
$\|g\|_{H^{1/2+\epsilon}(\Gamma)}=\|G\|_{H^{1+\epsilon}(D)}$,
respectively. Then, in view of \eqref{estimate:product:H^1(volume)}, 
we first get
$$
\|FG\|_{H^1(D)} \leq C \|F\|_{H^1(D)}\ \|G\|_{H^{1+\epsilon}(D)} 
=  C \|f\|_{H^{1/2}(\Gamma)} \ \|g\|_{H^{1+\epsilon}(\Gamma)},
$$
and then by the definition of the trace norm
\begin{equation*}
\|fg\|_{H^{1/2}(\Gamma)} 
= \inf_{\substack{\phi \in H^1(D)\\ \phi =fg \text{ on }\Gamma}} 
\| \phi\|_{H^1(D)} \leq \|FG\|_{H^1(D)} 
\leq  C \|f\|_{H^{1/2}(\Gamma)} \ \|g\|_{H^{1/2+\epsilon}(\Gamma)}.
\end{equation*}

Assume now that $g$ satisfies the additional property 
that there exists some $a>0$ such that $g\geq a$ on $\Gamma$. 
Consider the function $I_a$ defined on $(0,+\infty)
\to\mathbb{R}$ by $I_a(t)=1/a$ if $t\leq a$ and by 
$I_a(t)=1/t$ otherwise. Obviously, this is a bounded 
Lipschitz function. We notice that $1/g = I_a \circ g$ and 
hence $1/g$ belongs to $H^{1/2+\epsilon}(\Gamma)$. As a 
consequence, since $f = (fg)\ (1/g)$, we obtain by the 
product estimate \eqref{estimate:product:H^1/2(surface)} that
\begin{equation*}
\|f\|_{H^{1/2}(\Gamma)} \leq C \|fg\|_{H^{1/2}(\Gamma)}
    \ \|1/g\|_{H^{1/2+\epsilon}(\Gamma)}. 
\end{equation*}
\end{proof}

With the help of this lemma, we obtain the following result.

\begin{lemma}
There exists a constant $C>0$ depending on $\Gamma$ such that
$$
I_1(\boldsymbol{V})\geq C\| \boldsymbol{V}_{\boldsymbol{n}}\|_{H^{1/2}(\Gamma)}^2.
$$
\end{lemma}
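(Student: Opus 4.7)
The plan is to combine the two lower bounds already derived in the preceding paragraphs with the reverse-product estimate \eqref{minoration:produit:H^1/2(surface)}. Concretely, the previous computation already gives
\[
I_1(\boldsymbol{V}) \ge C_P(D)\,\|u'\|_{H^{1/2}(\Gamma)}^2,
\]
where $u'$ is characterized on $\Gamma$ by the boundary condition
\[
u' = -\,(\partial_{\boldsymbol{n}} u)\,\boldsymbol{V}_{\boldsymbol{n}}.
\]
So the task reduces to bounding $\|u'\|_{H^{1/2}(\Gamma)}$ from below by $\|\boldsymbol{V}_{\boldsymbol{n}}\|_{H^{1/2}(\Gamma)}$, which is exactly the content of \eqref{minoration:produit:H^1/2(surface)} applied with $f = \boldsymbol{V}_{\boldsymbol{n}}$ and $g = \partial_{\boldsymbol{n}} u$.

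To apply that estimate I need to check its two hypotheses for $g = \partial_{\boldsymbol{n}} u$. First, a strict positive lower bound: since $u$ is harmonic in $D$ with $u=1$ on $\Sigma$ and $u=0$ on $\Gamma$, Hopf's lemma (the strong maximum principle applied at the $\mathcal{C}^2$ boundary $\Gamma$) gives $\partial_{\boldsymbol{n}} u \ge a > 0$ on $\Gamma$ for some $a$ depending on $\Gamma$ and $\Sigma$. Second, the required Sobolev regularity $\partial_{\boldsymbol{n}} u \in H^{1/2+\epsilon}(\Gamma)$: this follows from standard elliptic regularity since $\Gamma,\Sigma \in \mathcal{C}^2$ give $u \in H^2(D)$ (in fact better), so the trace $\partial_{\boldsymbol{n}} u$ lies at least in $H^{1/2}(\Gamma)$, and a small amount of extra regularity, say $\Gamma \in \mathcal{C}^{2,\alpha}$ or equivalently a mollification argument on the piecewise $\mathcal{C}^2$ data, pushes this to $H^{1/2+\epsilon}(\Gamma)$. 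With these two facts in hand, \eqref{minoration:produit:H^1/2(surface)} yields
\[
\|\boldsymbol{V}_{\boldsymbol{n}}\|_{H^{1/2}(\Gamma)}
\le C\,\|u'\|_{H^{1/2}(\Gamma)}\,\bigl\|1/(\partial_{\boldsymbol{n}} u)\bigr\|_{H^{1/2+\epsilon}(\Gamma)},
\]
and inverting this and inserting it into the previous inequality produces the claimed constant $C>0$ depending only on $\Gamma$ (via $C_P(D)$, $a$, and $\|1/(\partial_{\boldsymbol{n}} u)\|_{H^{1/2+\epsilon}(\Gamma)}$).

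The main obstacle I anticipate is the verification of the $H^{1/2+\epsilon}(\Gamma)$ regularity of $\partial_{\boldsymbol{n}} u$ under merely the $\mathcal{C}^2$ assumption stated in Lemma~\ref{lemshapederivatives}: strictly speaking one has $\partial_{\boldsymbol{n}} u \in H^{1/2}(\Gamma)$, so a tiny additional smoothness of $\Gamma$ (or an approximation argument) is needed to gain the $\epsilon$. Everything else is essentially a chain of inequalities: the positivity of $\partial_{\boldsymbol{n}} u$ is classical, and the product bound is exactly the statement proved just above. Hence the proof is essentially one line once the regularity is justified, and the constant $C$ is stable as $\Gamma$ varies within a class of uniformly $\mathcal{C}^2$ boundaries lying in the annulus considered in Section~\ref{sec:analex}, which will be important when we later want uniform strong convexity of the objective.
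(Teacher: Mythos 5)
Your proposal is correct and follows essentially the same route as the paper: both combine the bound $I_1(\boldsymbol{V})\ge C_P(D)\|u'\|_{H^{1/2}(\Gamma)}^2$ from \eqref{estimation:coercivite:I_1} with the reverse product estimate \eqref{minoration:produit:H^1/2(surface)} applied to $f=\boldsymbol{V}_{\boldsymbol{n}}$, $g=\partial_{\boldsymbol{n}}u$, using the strong maximum principle for the lower bound on $g$. Your remark that plain $\mathcal{C}^2$ boundaries only barely deliver the needed $H^{1/2+\epsilon}(\Gamma)$ regularity of $\partial_{\boldsymbol{n}}u$ is a fair observation the paper passes over with ``under our regularity assumptions,'' and it is resolved in the sequel where the boundaries are taken in $\mathcal{C}^{2,2\alpha}$.
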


\begin{proof}

Under our regularity assumptions on the boundaries $\Sigma$ and 
$\Gamma$, we can apply \eqref{minoration:produit:H^1/2(surface)} for
$f=\boldsymbol{V}_{\boldsymbol{n}}$ and $g=\partial_{\boldsymbol n} u$ so that $u'=-fg$. 
The lower bound on $g$ comes from the strong maximum principle 
and the compactness of $\Gamma$. Hence, we have 
$$
\|\boldsymbol{V}_{\boldsymbol{n}}\|_{H^{1/2}(\Gamma)} 
\leq C \|\boldsymbol{V}_{\boldsymbol{n}} \partial_{\boldsymbol{n}} u\|_{H^{1/2}(\Gamma)}    
\ \|1/(\partial_{\boldsymbol{n}} u)\|_{H^{1/2+\epsilon}(\Gamma)}. 
$$
The claim then follows from \eqref{estimation:coercivite:I_1}.
\end{proof}

\subsubsection{On the sign of $\boldsymbol{I_2}$}
The sign of the second term 
$$
I_2(\boldsymbol{V}) 
= \int_\Gamma\left(\frac{\partial u}{\partial\boldsymbol{n}}\right)^2 
\boldsymbol{V}\cdot\nabla_{\boldsymbol\tau} \boldsymbol{V}_{\boldsymbol{n}} \D s
$$
is less clear since it has the sign of the purely geometric term 
$\boldsymbol{V}\cdot\nabla_{\boldsymbol{\tau}} \boldsymbol{V}_{\boldsymbol{n}}$. 
Indeed, the sign of that term and hence of $I_2$ depends on the specific 
class of perturbations under consideration. 

The natural parameterization of convex domains is the one using 
support functions. We restrict ourselves to perturbations 
of a convex domain that preserve convexity. We then check that 
in this situation $I_2$ takes only nonnegative values.

Recently, shape calculus for convex domains based on the 
Minkowski sum and therefore on support functions was developed 
in \cite{Boulkhemair,BoulkhemairChakib}). For our purposes, 
however, it suffices to use simpler tools. To this end, let 
us recall the definition of the support function and its main 
properties. Convex sets $K\subset\mathbb{R}^d$ are parameterized 
by their support function $h_K$ defined on $\R^d$ by 
$$
h_K(x)=\sup \{ x\cdot y \mid y \in K \}.
$$
The monotonicity property 
$
K_1\subset K_2\Rightarrow h_{K_1}\leq h_{K_2}
$ 
is clear from this definition. In particular, for nonnegative real numbers 
$a<b$, we have 
$$
B(0,a)\subset K \subset B(0,b)\Rightarrow a\leq h_K\leq b.
$$
The support function of a convex set is homogeneous of degree 
one and hence can be restricted to the unit sphere $\mathbb{S}^{d-1}$ 
without loss of generality. To simplify notation, we are still abusively 
calling this restriction $h_K$. 

Let us introduce the parameterization mapping $\gG$ defined 
over the set $\Kd$ of convex domains in $\R^d$ by
$$
 \gG : \Kd \rightarrow \mathcal{C}^0(\mathbb{S}^{d-1}), \quad K\mapsto h_K.
$$
A crucial property is the isometric connection between the Hausdorff 
distance and the $L^{\infty}$-norm 
on $\mathcal{C}^0(\mathbb{S}^{d-1},\R)$: 
for all $K_1, K_2 \in \Kd$, 
\begin{equation}
\label{relation:cruciale}
\dH(K_1,K_2) = \|h_{K_1}-h_{K_2}\|_{L^{\infty}(\mathbb{S}^{d-1})}.
\end{equation} 
Reconstructing a convex set from a support function can be performed using the envelope operator  
$$
\mathcal{E}\colon \mathcal{C}^1(\mathbb{S}^{d-1},\R)\to 
\mathcal{C}^1(\mathbb{S}^{d-1},\R^d),\quad
  h\mapsto\mathcal{E}[h],
$$
defined for all $x \in\mathbb{S}^{d-1}$ by
$$
\mathcal{E}[h](x) = h(x) x + \nabla_{\boldsymbol{\tau}} h(x).
$$
This operator allows to reconstruct a convex set whose 
restricted support is $h$. Notice that this point 
has been investigated in the works of Antunes and Bogosel
\cite{AntunesBogosel,Bogosel}. 

In the planar case, one gets simply a periodic function
$ h\colon [0,2\pi]\to\R$ and a parameterization of a set whose 
support function $h$ is 
$$
\mathcal{E}[h]: 
\theta \mapsto  h(\theta) \boldsymbol{e}_r(\theta)
+ h'(\theta) \boldsymbol{e}_\theta(\theta). 
$$
Here and in the following, $\boldsymbol{e}_r(\theta) = 
(\cos\theta,\sin\theta)$ denotes the radial direction and 
$\boldsymbol{e}_\theta(\theta) := \boldsymbol{e}_r(\theta)' 
= (-\sin\theta,\cos\theta)\perp\boldsymbol{e}_r$. Thus, we get 
\begin{equation}\label{eq:derivative_supp_fct}
\begin{aligned}
\mathcal{E}[h]'(\theta)&=h'(\theta) \boldsymbol{e}_r(\theta) 
+ h(\theta)\boldsymbol{e}_\theta(\theta)
+ h''(\theta) \boldsymbol{e}_\theta(\theta) 
- h'(\theta) \boldsymbol{e}_r(\theta) \\
&= \big(h(\theta)+h''(\theta)\big)\boldsymbol{e}_\theta(\theta).
\end{aligned}
\end{equation}
Therefore, the unit tangent vector $\boldsymbol{\tau}$ 
at $\mathcal{E}[h](\theta)$ is $\boldsymbol{e}_\theta(\theta)$ 
and the unit outward normal vector $\boldsymbol{n}$ is 
then $\boldsymbol{e}_r(\theta)$. A perturbation $q$ of 
the support function generates the support function $h+tq$ 
for $|t|$ sufficiently small and thus the parameterization 
$$
\mathcal{E}[h+q](\theta) = (h+tq)(\theta)\boldsymbol{e}_r(\theta)  
+ (h+tq)'(\theta)\boldsymbol{e}_\theta(\theta).
$$
Therefore, the deformation field is
$$
\boldsymbol{V} = \frac{\D}{\D t}\mathcal{E}[h+tq]
= q \boldsymbol{e}_r + q' \boldsymbol{e}_\theta 
$$
and we conclude $\boldsymbol{V}_{\boldsymbol n} = q$. Notice that 
this expression also makes sense in the neighborhood of the curve 
$\Gamma$. We can next compute directly the gradient and observe 
that it is tangent to the curve 
$$
\nabla\boldsymbol{V}_{\boldsymbol{n}}
= \cfrac{q'}{\sqrt{h^2+(h')^2}}\,\boldsymbol{e}_\theta 
= \nabla_{\boldsymbol{\tau}}\boldsymbol{V}_{\boldsymbol{n}}, 
$$
which implies
$$
\boldsymbol{V}\cdot\nabla_{\boldsymbol\tau}\boldsymbol{V}_{\boldsymbol{n}} 
= \cfrac{(q')^2}{\sqrt{h^2+(h')^2}} \ge 0.
$$
Since $h>0$ (indeed, we need a disk in the inner domain here 
that is uniformly greater than zero), we have herewith 
shown that $I_2[\boldsymbol{q}]\geq 0$.

\begin{remark}{Under radial deformations of convex domains, $I_2$ has no sign.} 
In the case of starlike domains, the outer boundary $\Gamma$ can
be parameterized by $\gamma\boldsymbol{e}_r$, 
where $\gamma:
[0,2\pi] \rightarrow (0,+\infty)$ denotes the radial function
and $\boldsymbol{e}_r = (\cos\theta,\sin\theta)$ is the radial 
direction. Then, the unit tangent vector $\boldsymbol{\tau}$ 
and the unit outward normal vector $\boldsymbol{n}$ are given 
by the formulae
$$
\boldsymbol{\tau} = \cfrac{1}{\sqrt{\gamma^2+(\gamma')^2}} 
\left(\gamma' \boldsymbol{e}_r +\gamma\boldsymbol{e}_\theta\right) 
\quad \text{and}\quad 
\boldsymbol{n} = \cfrac{1}{\sqrt{\gamma^2+(\gamma')^2}} 
\left(\gamma \boldsymbol{e}_r -\gamma'\boldsymbol{e}_\theta\right).
$$
Thus, the normal component of any boundary deformation field of 
the type $\boldsymbol{V} = \varphi\boldsymbol{e}_r$ is
$$
\boldsymbol{V}_{\boldsymbol{n}} = \boldsymbol{V} \cdot \boldsymbol{n} 
= \cfrac{\gamma \varphi}{\sqrt{\gamma^2+(\gamma')^2}}.
$$
Hence, we find 
$$
\nabla (\boldsymbol{V} \cdot \boldsymbol{n}) 
= \cfrac{1}{\gamma} \left(\cfrac{\gamma \varphi}
{\sqrt{\gamma^2+(\gamma')^2}}\right)'\boldsymbol{e}_\theta
$$
and thus
$$
\nabla_{\boldsymbol\tau} (\boldsymbol{V} \cdot \boldsymbol{n}) 
= \cfrac{1}{\gamma^2+(\gamma')^2} \left(\cfrac{\gamma \varphi}
{\sqrt{\gamma^2+(\gamma')^2}}\right)' 
\left(\gamma' \boldsymbol{e}_r +\gamma\boldsymbol{e}_\theta\right).
$$
Consequently, the term $\boldsymbol{V}\cdot\nabla_{\boldsymbol\tau}
\boldsymbol{V}_{\boldsymbol{n}}$ is given by
\begin{align*}
\boldsymbol{V}\cdot \nabla_{\boldsymbol{\tau}} (\boldsymbol{V} \cdot 
\boldsymbol{n})&=\cfrac{\gamma'\varphi}{\gamma^2+(\gamma')^2} 
\left(\cfrac{\gamma \varphi}{\sqrt{\gamma^2+(\gamma')^2}}\right)'\\
&= \cfrac{(\gamma'\varphi)^2}{\big(\gamma^2+(\gamma')^2\big)^{3/2}}
+ \cfrac{(\gamma^2)'}{2\sqrt{\gamma^2+(\gamma')^2}}
\left(\cfrac{\varphi^2}{\gamma^2+(\gamma')^2}\right)'.
\end{align*}
Obviously, this previous expression has no sign 
since $\varphi$ is arbitrary. 
\end{remark}

\subsubsection{Restricting the objective to a class 
of domains to get a strongly convex one}
The previous investigation of the shape Hessian motivates 
the study of the free boundary problem under consideration
in the class of convex domains, parameterized by means
of the support function. To this end, we identify
the boundary $\Gamma$ with its support function and
set $\mathcal{J}(h) = J\big(\mathcal{E}[h]\big)$. In view of the above results 
and translating them in terms of support function (see 
\cite{Boulkhemair,BoulkhemairChakib}), we have proven that 
$$
    D \mathcal{J}(h)[q]
    = \int_\Gamma q(\theta_{\boldsymbol{n}}) \left[ \lambda^2
    -\left(\frac{\partial u}{\partial\boldsymbol{n}}\right)^2 \right] \D s
$$
while for the shape Hessian one gets
\begin{equation}
\label{local:lower:bound:hessian}
   D^2 \mathcal{J}(h)[q] 
    \geq C(h) \|q(\theta_{\boldsymbol{n}})\|^2_{H^{1/2}(\Gamma)} 
\end{equation}
by combining the estimates on $I_1$ and 
$I_2$. Herein, $\theta_{\boldsymbol{n}}$ 
is the angle $\theta\in [0,2\pi]$ that is imposed 
by the normal vector, i.e., $\boldsymbol{n} = (\cos 
\theta_{\boldsymbol{n}},\sin\theta_{\boldsymbol{n}})$.
This is a convexity result but a weak one. Its main 
weakness is its non-uniformity with respect to the 
design variable. Moreover, it is not formulated in 
a differentiation norm.

\subsection{Uniform lower bounds of the Hessian}
We shall next study when there exists a uniform bound that
is independent of $\Gamma$. To that end, we proceed with 
the following strategy. First, we introduce a parameterization 
of the family of domains under consideration
by restricting ourselves to starlike boundaries $\Sigma$ and 
$\Gamma$. The boundary value problem is first transported to a 
fixed annulus, resulting in a family of parameterized problems 
on that annulus. The local inversion theorem is employed to 
demonstrate the regularity of the map associating the boundaries 
to the solution of the parameterized boundary value problem. 
Subsequently, the boundaries are restricted to a compact 
context for the parameterization, allowing us to obtain uniform bounds.  

In that spirit, for given positive numbers $\alpha\in (0,1/2)$, 
$0<r_{\underline{\Gamma}}<r_{\overline{\Gamma}}<M_\Gamma$, we 
consider the class $\Sabm$ of periodic functions defined 
on $[0,2\pi]$ by 
\begin{equation}
\label{def:class:support:functions}
\Sabm =\{ h\in \mathcal{C}_{per}^{3,2\alpha} \mid \forall \theta\in[0,2\pi],\  
r_{\underline{\Gamma}}\leq h(\theta)\leq r_{\overline{\Gamma}},\ 
(h+h'')(\theta)\ge 0, \text{ and } \|h\|_{\mathcal{C}^{3,2\alpha}} \leq M_\Gamma\}. 
\end{equation}
This is a compact subset of $\mathcal{C}_{per}^{3,\alpha}$ that 
parameterizes through the support function the class
\begin{equation}
\label{def:class:concex:sets}
\Kabm =\{ K \subset \mathbb{R}^2 \mid \exists h \in\Sabm,\  
    \Gamma=\partial K=\mathcal{E}[h] \},
\end{equation}
of convex subsets of $\mathbb{R}^2$ with a $\mathcal{C}_{per}^{2,2\alpha}$ 
boundary between the two concentric circles of radii $r_{\underline{\Gamma}}$
and $r_{\overline{\Gamma}}$, respectively. Note that the sets $\Sabm$
and $\Kabm$ are convex and closed, respectively, as for any pair of
functions $h_1, h_2\in\Sabm$, the convex combination $\lambda h_1
+ (1-\lambda)h_2$ is also a member of the class $\Sabm$ for all $\lambda\in(0,1)$.

With this notation at hand, we are now in the position to state 
the main result of this section. 

\begin{proposition} \label{uniform:weak:convexity:on:Sabm}
Given positive numbers $\alpha\in (0,1/2)$, $0<r_{\underline{\Gamma}}
<r_{\overline{\Gamma}}<M_\Gamma$, there exists a positive number $C$ 
depending on $r_{\underline{\Gamma}}$, $r_{\overline{\Gamma}}$, and 
$M_\Gamma$ such that for all $h\in\Sabm$, 
\begin{equation}
\label{uniform:lower:bound:hessian}
   D^2 \mathcal{J}(h)[q] 
    \geq C \|q(\theta_{\boldsymbol{n}})\|^2_{H^{1/2}(\Gamma)} .
\end{equation}
\end{proposition}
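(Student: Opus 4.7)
The plan is to upgrade the pointwise bound \eqref{local:lower:bound:hessian} to a uniform one by exploiting the compactness of $\Sabm$ in $\mathcal{C}_{per}^{3,\alpha}$ (valid since $2\alpha>\alpha$, via Arzelà--Ascoli for Hölder spaces). Tracking the proof of \eqref{local:lower:bound:hessian}, the constant $C(h)$ is a product of three quantities: the Poincaré constant $C_P(D_h)$ of the annulus $D_h$ bounded by $\Sigma$ and $\Gamma_h=\mathcal{E}[h]$, a product--estimate constant from the Banach algebra argument, and the quantity $\|1/(\partial_{\boldsymbol n}u_h)\|_{H^{1/2+\epsilon}(\Gamma_h)}$; a strictly positive uniform lower bound $\inf_{\Gamma_h}\partial_{\boldsymbol n}u_h\geq c_0>0$ is also needed. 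The plan is therefore to show that each of these quantities varies continuously with $h$ and that $h\mapsto\Gamma_h$ gives a uniformly regular family of boundaries, so that continuous attains its minimum on the compact set $\Sabm$.

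First I would pull the Dirichlet problem \eqref{bvp} back to a fixed reference annulus $D_0$ bounded by $\Sigma$ and a reference circle, via the $\mathcal{C}^{3,2\alpha}$-diffeomorphism $T_h$ constructed from the envelope $\mathcal{E}[h]$ (e.g.\ linear interpolation in the radial direction between $\Sigma$ and $\Gamma_h$). This produces a family of transported equations $L_h\tilde u_h=0$ on $D_0$, whose coefficients depend continuously (in fact $\mathcal{C}^{1,2\alpha}$) on $h\in\Sabm$ because the bounds $r_{\underline\Gamma}\leq h\leq r_{\overline\Gamma}$ together with $\|h\|_{\mathcal{C}^{3,2\alpha}}\leq M_\Gamma$ keep $T_h$ uniformly bi-Lipschitz and its Jacobian uniformly bounded away from zero. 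A standard application of the implicit function theorem in the Schauder scale then yields that the solution map $h\mapsto\tilde u_h\in\mathcal{C}^{2,2\alpha}(\overline{D_0})$ is continuous, hence uniformly bounded on $\Sabm$. Pushing forward, this gives $u_h\in\mathcal{C}^{2,2\alpha}(\overline{D_h})$ with uniformly bounded norm and, by Hopf's lemma applied uniformly (using the uniform interior sphere condition provided by the curvature bound $(h+h'')\geq 0$ and $\|h\|_{\mathcal{C}^{3,2\alpha}}\leq M_\Gamma$), a uniform lower bound $\partial_{\boldsymbol n}u_h\geq c_0>0$ on $\Gamma_h$.

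With these uniform $\mathcal{C}^{2,2\alpha}$ bounds in hand, the quantity $\|1/(\partial_{\boldsymbol n}u_h)\|_{H^{1/2+\epsilon}(\Gamma_h)}$ can be uniformly bounded above, choosing $\epsilon$ so that $1/2+\epsilon<2\alpha$ and using the transported $L^\infty$-control together with the composition rule with the Lipschitz cut-off $I_{c_0}$ from the product lemma. The Poincaré constant $C_P(D_h)$ is uniformly bounded below since all $D_h$ lie in the fixed annulus $B(0,\overline{r}_\Sigma)\subset D_h\subset B(0,r_{\overline\Gamma})$ and one applies continuity of $C_P$ under smooth deformation (or simply compares to a fixed larger domain). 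Finally, the $H^{1/2}(\Gamma_h)$-norm itself needs to be compared across the family: I would transport it to $H^{1/2}(\mathbb{S}^1)$ via the parameterization $\mathcal{E}[h]$ and use that the corresponding change-of-variable constants are controlled by $\|\mathcal{E}[h]\|_{\mathcal{C}^1}$ and the uniform positivity of $|\mathcal{E}[h]'|=h+h''$ (the latter bounded below away from zero at least in a limiting/compactness argument based on \eqref{eq:derivative_supp_fct}).

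The main obstacle I anticipate is precisely this last point: keeping $h+h''$ uniformly bounded below is not guaranteed by the definition \eqref{def:class:support:functions} (which only asks $h+h''\geq 0$), and degeneration of $|\mathcal{E}[h]'|$ would break both the diffeomorphism argument for $T_h$ and the comparability of surface norms. The resolution I would pursue is to observe that in \eqref{local:lower:bound:hessian} only the regularity of $\Gamma_h$ in the ambient sense matters, so one can work with the parameterization by angle of the normal $\theta_{\boldsymbol n}$ rather than by arc length, and use the uniform $\mathcal{C}^{3,2\alpha}$-bound on $h$ together with the continuity of $h\mapsto(r_{\underline\Gamma},r_{\overline\Gamma})$-comparable Hausdorff geometry to extract uniform constants. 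With the three constants controlled, taking $C=\inf_{h\in\Sabm}C(h)>0$ by continuity on the compact $\Sabm$ yields \eqref{uniform:lower:bound:hessian}.
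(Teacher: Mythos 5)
Your proposal follows essentially the same route as the paper: track the constants in the derivation of \eqref{local:lower:bound:hessian} (Poincar\'e, product estimate, trace-norm equivalence, and the positive lower bound on $\partial_{\boldsymbol{n}}u$), transport the state equation to a fixed reference annulus by radial linear interpolation between the two boundaries, establish continuity of the solution map in the H\"older scale via the inverse/implicit function theorem, and conclude by compactness of $\Sabm$. The only real deviations are cosmetic: the paper obtains the uniform lower bound on $\partial_{\boldsymbol{n}}u$ purely from continuity--compactness (strong maximum principle plus continuity of $A\mapsto v_A$ on the compact coefficient class $K_M$) rather than a quantitative Hopf argument, and it sidesteps your worry about $h+h''$ possibly vanishing by building the transport map from the \emph{radial} parameterizations $\sigma(\theta)\mathbf{e}_r$, $\gamma(\theta)\mathbf{e}_r$ (whose derivatives are uniformly controlled for convex sets pinched between two circles) and by measuring $q$ in the normal-angle variable $\theta_{\boldsymbol{n}}$, exactly the resolution you sketch at the end.
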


To prove Proposition \ref{uniform:weak:convexity:on:Sabm}, we check the 
uniform behavior of each constant in the successive inequalities we used. 
These are
\begin{itemize}
    \item the Poincaré inequality in \eqref{estimation:coercivite:I_1}. 
    The uniform bound follows from the geometric bounds of $\Gamma$.
    \item the product inequality \eqref{estimate:product:H^1(volume)}. 
    The uniform bound follows from the existence of uniform (with 
    respect to $D \in \Kabm$) extension operators for the 
    Sobolev spaces $H^1(D)$ and  $H^{1+\epsilon}(D)$ 
    to the whole $H^1(\mathbb{R}^2)$ and $H^{1+\epsilon}(\mathbb{R}^2)$, 
    induced by the upper bound for $\|h\|_{\mathcal{C}^{3,2\alpha}}$.  
    \item the equivalence between the trace norm and the intrinsic 
    Sobolev norm for fractional Sobolev spaces on a boundary for 
    the upper bound of $1/g$ by composition. Gagliardo has shown 
    in \cite{Gagliardo} that the two different norms on $H^{1/2}$ 
    are equivalent if the domains are uniformly Lipschitz.
    \item finally, the lower bound for the normal derivative 
    $\partial_{\boldsymbol{n}} u$. 
\end{itemize}
The latter item is less standard, hence we shall elaborate on it. The 
main difficulty we face here is to get a uniform lower bound of the
normal derivative. Clearly, it is nonnegative thanks to the maximum 
principle. Nevertheless, by its own, this argument cannot provide a 
uniform lower bound. We need an additional ingredient: continuity 
and compactness with respect to the inner and outer boundaries. 

We transform the boundary value problem with variable boundaries
to a boundary value problem with fixed boundary but variable coefficients.
The boundaries $\Sigma$ and $\Gamma$ are parameterized by $\sigma(\theta)
{\bf e}_r$ and $\gamma(\theta){\bf e}_r$, respectively. Here, 
\begin{equation}\label{eq:radii}
 \sigma: [0,2\pi]\to[\underline{r}_\Sigma,\overline{r}_\Sigma],
 \quad\gamma: [0,2\pi]\to[\underline{r}_\Gamma,\overline{r}_\Gamma]
\end{equation}
denote the associated radial functions of the interior and 
exterior boundaries and ${\bf e}_r = \big(\cos(\theta),\sin(\theta)\big)$ 
is the radial vector. Consider the annulus $\circledcirc$ with bounds 
$\overline{r}_\Sigma < \underline{r}_\Gamma$. Then, the map
\[
 \Phi:\circledcirc\to\Omega,\quad
 (r,\theta)\mapsto\bigg[\frac{r-\overline{r}_\Sigma}{\underline{r}_\Gamma-\overline{r}_\Sigma}
 \gamma(\theta) + \frac{\underline{r}_\Gamma-r}{\underline{r}_\Gamma-\overline{r}_\Sigma}
 \sigma(\theta)\bigg]{\bf e}_r
\]
maps the annulus $\circledcirc$ one-to-one to the annular domain 
$\Omega$ described by the boundaries $\Sigma$ and $\Gamma$.

\begin{lemma}
The singular values of the Jacobian $\Phi'(r,\theta)$
are uniformly bounded from above and below for all 
$(r,\theta)$ from the annulus $\circledcirc$ provided
that the parameterizations $\gamma$ and $\sigma$ satisfy
\eqref{eq:radii} with uniformly bounded derivatives.
\end{lemma}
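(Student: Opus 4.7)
The plan is to reduce the statement to uniform two-sided bounds on the determinant and on the Frobenius norm of $\Phi'(r,\theta)$, then use the elementary identities $s_1 s_2 = |\det \Phi'|$ and $s_1^2 + s_2^2 = \|\Phi'\|_F^2$ for the singular values $s_1 \geq s_2 \geq 0$ of a $2 \times 2$ matrix, which together yield $s_1 \le \|\Phi'\|_F$ and $s_2 \ge |\det\Phi'|/\|\Phi'\|_F$. It therefore suffices to exhibit constants, depending only on $\underline{r}_\Sigma$, $\overline{r}_\Sigma$, $\underline{r}_\Gamma$, $\overline{r}_\Gamma$ and the uniform $C^1$-bounds on $\sigma$ and $\gamma$, that control $\det\Phi'$ from below and $\|\Phi'\|_F$ from above.

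First I would rewrite the map as $\Phi(r,\theta) = \rho(r,\theta)\,\boldsymbol{e}_r(\theta)$ with
\[
 \rho(r,\theta) = \frac{r-\overline{r}_\Sigma}{\underline{r}_\Gamma-\overline{r}_\Sigma}\,\gamma(\theta)
 + \frac{\underline{r}_\Gamma-r}{\underline{r}_\Gamma-\overline{r}_\Sigma}\,\sigma(\theta),
\]
and compute the partial derivatives $\partial_r\Phi = \rho_r\,\boldsymbol{e}_r$ and $\partial_\theta\Phi = \rho_\theta\,\boldsymbol{e}_r + \rho\,\boldsymbol{e}_\theta$. In the orthonormal frame $(\boldsymbol{e}_r,\boldsymbol{e}_\theta)$ this gives the triangular Jacobian
\[
 \Phi'(r,\theta) = \begin{pmatrix} \rho_r & \rho_\theta \\ 0 & \rho \end{pmatrix},
\]
whose determinant is simply $\rho\cdot\rho_r$.

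Next I would bound each entry. Since $\gamma(\theta)\ge\underline{r}_\Gamma$ and $\sigma(\theta)\le\overline{r}_\Sigma$ by \eqref{eq:radii}, one has $\gamma(\theta)-\sigma(\theta)\ge\underline{r}_\Gamma-\overline{r}_\Sigma>0$, so that
\[
 \rho_r(r,\theta) = \frac{\gamma(\theta)-\sigma(\theta)}{\underline{r}_\Gamma-\overline{r}_\Sigma}\ge 1,
\]
and symmetrically $\rho_r\le(\overline{r}_\Gamma-\underline{r}_\Sigma)/(\underline{r}_\Gamma-\overline{r}_\Sigma)$. As $\rho(r,\theta)$ is a convex combination of $\gamma(\theta)$ and $\sigma(\theta)$ for $r\in[\overline{r}_\Sigma,\underline{r}_\Gamma]$, it satisfies $\underline{r}_\Sigma\le\rho\le\overline{r}_\Gamma$. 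For the off-diagonal entry, the same convex-combination structure gives $|\rho_\theta(r,\theta)|\le\max(\|\gamma'\|_\infty,\|\sigma'\|_\infty)$, which is finite by the assumed uniform bound on the derivatives of the radial functions.

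Combining these bounds, $|\det\Phi'|=\rho\rho_r\ge\underline{r}_\Sigma>0$ and $\|\Phi'\|_F^2=\rho_r^2+\rho_\theta^2+\rho^2$ is uniformly bounded above. The singular value identities then yield
\[
 s_2 \ge \frac{\underline{r}_\Sigma}{\sqrt{\rho_r^2+\rho_\theta^2+\rho^2}}
 \quad\text{and}\quad s_1 \le \sqrt{\rho_r^2+\rho_\theta^2+\rho^2},
\]
both uniformly in $(r,\theta)\in\circledcirc$, which is the claim. No step is genuinely hard: the only thing to watch is that the lower bound on $\rho_r$ crucially exploits the gap condition $\overline{r}_\Sigma<\underline{r}_\Gamma$ from the geometric setting of Section~\ref{sec:analex}; without that separation the transformation degenerates and the singular value $s_2$ could vanish.
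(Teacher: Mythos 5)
Your proof is correct and follows essentially the same route as the paper: both exhibit the triangular structure of $\Phi'$ in the $(\boldsymbol{e}_r,\boldsymbol{e}_\theta)$ frame, bound the diagonal entries from above and below and the off-diagonal entry from above, and conclude; you merely make the final step explicit via $s_1 s_2=|\det\Phi'|$ and $s_1^2+s_2^2=\lVert\Phi'\rVert_F^2$. The only cosmetic difference is that the paper includes the polar scaling factor $1/r$ in the $\theta$-column, which is immaterial here since $r\in[\overline{r}_\Sigma,\underline{r}_\Gamma]$ is itself uniformly bounded above and below.
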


\begin{proof}
We shall compute the Jacobian of the map $\Phi$. With ${\bf e}_\theta 
= \big(-\sin(\theta),\cos(\theta)\big)$, we find 
\begin{align*}
 \Phi'(r,\theta) &= \frac{\partial\Phi(r,\theta)}{\partial r}{\bf e}_r^\top
    + \frac{1}{r}\frac{\partial\Phi(r,\theta)}{\partial \theta}{\bf e}_\theta^\top \\
  &= \frac{1}{\underline{r}_\Gamma-\overline{r}_\Sigma}
  [\gamma(\theta)-\sigma(\theta)]{\bf e}_r{\bf e}_r^\top
  + \frac{1}{r}\bigg[\frac{r-\overline{r}_\Sigma}{\underline{r}_\Gamma-\overline{r}_\Sigma}
 \gamma'(\theta) + \frac{\underline{r}_\Gamma-r}{\underline{r}_\Gamma-\overline{r}_\Sigma}
 \sigma'(\theta)\bigg]{\bf e}_r{\bf e}_\theta^\top\\
 &\qquad+ \frac{1}{r}\bigg[\frac{r-\overline{r}_\Sigma}{\underline{r}_\Gamma-\overline{r}_\Sigma}
 \gamma(\theta) + \frac{\underline{r}_\Gamma-r}{\underline{r}_\Gamma-\overline{r}_\Sigma}
 \sigma(\theta)\bigg]{\bf e}_\theta{\bf e}_\theta^\top.
\end{align*}
The Jacobian $\Phi'(r,\theta)$ is hence triangular
with diagonal entries
\[
  a(r,\theta) := \frac{\gamma(\theta)-\sigma(\theta)}
    {\underline{r}_\Gamma-\overline{r}_\Sigma},
  \quad b(r,\theta) := \frac{1}{r}\bigg[\frac{r-\overline{r}_\Sigma}
  {\underline{r}_\Gamma-\overline{r}_\Sigma}\gamma(\theta) 
  + \frac{\underline{r}_\Gamma-r}
  {\underline{r}_\Gamma-\overline{r}_\Sigma}\sigma(\theta)\bigg].
\]
and the off-diagonal entry
\[
  c(r,\theta) := \frac{1}{r}\bigg[\frac{r-\overline{r}_\Sigma}
  {\underline{r}_\Gamma-\overline{r}_\Sigma}\gamma'(\theta) 
  + \frac{\underline{r}_\Gamma-r}
  {\underline{r}_\Gamma-\overline{r}_\Sigma}\sigma'(\theta)\bigg].
\]
In view of
\[
  \underline{r}_\Sigma<\overline{r}_\Sigma\le r
  \le\underline{r}_\Gamma<\overline{r}_\Gamma
\]
and
\[
  0 < \underline{r}_{\Gamma}-\overline{r}_\Sigma 
  \le \gamma(\theta)-\sigma(\theta) 
  \le \overline{r}_{\Gamma}-\underline{r}_\Sigma < \infty
\]
for all $(r,\theta)$ from the annulus $\circledcirc$, 
the diagonal entries $a(r,\theta)$ and $b(r,\theta)$ 
are uniformly bounded from above and below for all 
annular domains $\Omega$ with starlike boundaries 
such that \eqref{eq:radii} holds. In addition, the 
modulus $|c(r,\theta)|$ of the off-diagonal entry is uniformly
bounded from above if $\gamma'(\theta)$ and $\sigma'(\theta)$ 
are. Straightforward calculation verifies that consequently 
the singular values of the Jacobian $\Phi'(r,\theta)$ are 
uniformly bounded from above and below.
\end{proof}

The boundary value problem \eqref{bvp} posed on $\Omega$
can be transformed to a boundary value problem in $\circledcirc$
by using the map $\Phi$. There holds 
\begin{equation}
\label{bvptransported}
    \begin{aligned}
    \div(A\nabla u) &= 0 \quad \text{in} \quad \circledcirc ,\\
    u &= 1 \quad \text{on} \quad \lVert x\rVert_2=\underline{r}_\Gamma,\\
    u &= 0 \quad \text{on} \quad \lVert x\rVert_2=\overline{r}_\Sigma ,
    \end{aligned}
\end{equation}
with the diffusion matrix $A$ is given by
\[
  A(r,\theta) = \big(\Phi'(r,\theta)\big)^{-1}
    \big(\Phi'(r,\theta)\big)^{-\top}\det\big(\Phi'(r,\theta)\big).
\]
Therefore, the diffusion matrix $A$ depends on the 
parameterizations $\gamma$ and $\sigma$ of the inner 
and outer boundaries and on their first order derivatives. 
If both $\gamma$ and $\sigma$ are in the H\"older class 
$\mathcal{C}^{2,\alpha}$ for some given $\alpha\in(0,1)$, 
then the diffusion matrix $A$ is $\mathcal{C}^{1,\alpha}$-smooth.

By the assumption $h\in \Sabm$, there exists a positive real 
number $M > 0$ such that the diffusion matrix distribution 
belongs to the subset $K_M$ of $ \mathcal{C}^{1,\alpha}
(\R_{sym}^{2\times 2})$ 
given by
$$
K_M=\left\{
A\in\mathcal{C}^{1,\alpha}(\R_{sym}^{2\times 2}) 
\text{ with } \frac{1}{M} \lVert\xi\rVert_2^2 \leq A\xi\cdot\xi\leq M\lVert\xi\rVert_2^2 
\text{ and } \|DA\|_{\infty} \leq M \right\}.
$$

Using standard arguments (local inverse theorem and a priori H\"older 
bounds up to the boundary, see \cite[Theorem 6-6, page 98]{GilbargTrudinger}, 
the solution map $A \mapsto v_A$ of the boundary value problem 
\eqref{bvptransported} is smooth and in particular continuous 
on the subset $K_M$ of $ \mathcal{C}^{1,\alpha}(\R_{sym}^{2\times 2})$.
The Neumann boundary data of the transformed solution are computed by 
$A\nabla u\cdot\boldsymbol{n}$.
In view of the uniform bounds 
on the singular values of $\Phi'$, we conclude the 
$L^\infty(0,2\pi)$-bound
\[
  0< c\le\partial_{\boldsymbol{n}} u(\theta) \le C<\infty
\]
for the Neumann derivative at the boundary $\Gamma$.

\subsection{Different interior boundaries}
So far, we have shown uniform bounds of the 
shape Hessian in the case of a fixed interior boundary
$\Sigma$. In particular, all the constants in these
bounds depend on this specific $\Sigma$. However, compactness arguments similar to those used for $\Gamma$ also apply
to $\Sigma$. Hence, if we assume that $\Sigma$
is starlike with periodic $\mathcal{C}^{2,2\alpha}$-smooth 
parameterization $\sigma(\theta){\bf e}_r$ such that 
$r_{\underline\Sigma}\le\sigma(\theta)\le 
r_{\overline\Sigma}$ for all $\theta\in [0,2\pi]$
and $\|\sigma\|_{\mathcal{C}^{2,2\alpha}}\le M_\Sigma$, then the 
uniform bounds still hold. In other words, we shall 
consider parameterizations from the set
\[
\mathcal{S}_\Sigma := \{\sigma\in\mathcal{C}_{per}^{2,2\alpha} |\ \forall \theta\in[0,2\pi],\  
r_{\underline{\Sigma}}\leq \sigma(\theta)\leq r_{\overline{\Sigma}}
\text{ and } \|\sigma\|_{\mathcal{C}^{2,2\alpha}} \leq M_\Sigma\}. 
\]
It can easily be shown that this set is also convex and closed. 
It is then sufficient to repeat the argument from the previous 
subsection (transporting the state equation from $D$ onto 
$\circledcirc$ and using explicit formulas as a function of 
the parameterization of the interior boundary) to show that 
the state and then the objective are continuous with 
respect to the interior boundary, and that the coercivity 
constant of the shape Hessian with respect to the support 
function of the outer boundary can be chosen to be uniform 
with respect to the interior boundary.

Consequently, the functional $\mathcal{J}$ defined on 
$\mathcal{K}_\Gamma \times \mathcal{S}_\Sigma$ by 
$$
\mathcal{J}(h,\sigma)= \int_{D} \norm{\nabla u}{2}^2 + \lambda^2 \D x , 
$$ 
where the boundary of $D$ has two connected components, 
the interior one being parameterized by the distance $\sigma$ 
to the origin and the outer one through the support function $h$. 
This functional $\mathcal{J}$ is continuous and there exists a 
constant $c_E>0$ such that for all $\sigma\in\mathcal{S}_\Sigma$ 
and all $h\in\mathcal{K}_\Gamma$ one has the estimate
\begin{equation}
\label{eq:coercivity-parametrized}
D^2_{h,h}\mathcal{J}(h,\sigma)[q,q]\geq c_E \|q\|_{H^{1/2}}^2
\quad\forall q\in H_{per}^{1/2}([0,2\pi]).
\end{equation}

To conclude this section, we define the random boundary $\Sigma$ 
as the image by the polar parameterization of a vector-valued random variable 
$\sigma \in L^\infty(\Omega,\mathcal{S}_\Sigma)$, where the latter set is 
comprised of (strongly) $\pP$-measurable functions $\sigma$ from $\Omega$ to 
the closed and convex set $\mathcal{S}_\Sigma$ and satisfying $\esssup_{\omega 
\in \Omega} \lVert \sigma(\omega)\rVert_{\mathcal{C}^{2,2\alpha}}<\infty$ 
for all $\sigma \in \mathcal{S}_\Sigma$. 
The continuity of 
the map $\mathcal{J}$ on $\mathcal{K}_\Gamma\times \mathcal{S}_\Sigma$ 
implies the measurability of the map $\omega \mapsto 
\mathcal{J}(h,\sigma(\omega))$. 

\section{On the stochastic gradient method with two-norm discrepancy}
\label{sec:SGM-2norm}

In this section, we prove convergence of the projected
stochastic gradient method for an abstract setting involving the two-norm discrepancy. This classical method dating back to Robbins and Monro \cite{Robbins1951} involves randomly sampling the otherwise intractable gradient and has been well-investigated in the literature. For the function space setting without this discrepancy, the stochastic gradient method and its variants have already been analyzed; see \cite{Goldstein1988, Yin1990, Culioli1990, Barty2007} and more recent contributions in the context of PDE-constrained optimization under uncertainty \cite{Geiersbach2019a, Geiersbach2020, Geiersbach2021c, Geiersbach2023, martin2021complexity}.
The setting we present in \cref{sec:abstract-setting} is adapted from \cite{eppler2007convergence}, where convergence of a deterministic Ritz--Galerkin-type method was proven. We show that this framework fits the free boundary problem investigated in the previous sections, where it was established that the energy functional is coercive in a weaker space than where it is continuous. In \cref{sec:SGM}, we present the method, which involves a modification of the typical projected stochastic gradient iteration whereby a stochastic gradient is computed on the weaker space and a projection is performed onto the stronger space. We provide a complete proof of almost sure convergence of iterates 
to the unique solution with respect to the weaker norm.

\subsection{Abstract setting}
\label{sec:abstract-setting}
In this section, we summarize our numerical approach to solving 
the free boundary problem \eqref{bvp}--\eqref{flux}. Let $X\subset H$
be two Hilbert spaces, which are dense in $L^2$, 
endowed with the 
inner products $( \cdot,\cdot)_X$ and $(\cdot, \cdot)_H$, respectively, 
and corresponding norms $\lVert \cdot\rVert_X$ and $\lVert \cdot \rVert_H$. 
The respective dual spaces are denoted by $X^*$ and $H^*$, which yields 
the Gelfand chain $X \subset H\subset L^2 \subset H^*\subset X^*$.
A ball centered at $r$ in a space $X'$ is denoted by $B_{\delta}^{X'}(r) 
= \{ h \in X' \mid \lVert h-r\rVert_{X'}< \delta \}$. We assume that 
$X \subset H$ with continuous embedding and that $X_{ad} \subset X$ 
is a bounded, closed, convex, and nonempty admissible set. 

Let $(\Omega, \mathcal{F}, \pP)$ be a complete probability space and 
$\xi \colon \Omega \to\Xi$ be a random function mapping to a (real) 
complete separable metric space $\Xi$. Now, consider the problem 
\begin{equation}\label{eq:SAproblem}
\underset{h \in X_{ad}}{\textup{minimize}} \quad \{j(h) = \E[\mathcal{J}(h,\xi)] \},
\end{equation}
where 
we assume that $j \colon O \subset X \rightarrow \R$ is twice continuously differentiable on an open set $O \supset X_{ad}$. A point $h^*$ is said to be a \textit{local solution} of \eqref{eq:SAproblem} in $X'$ if $j(h^*) \leq j(h)$ for all $h \in X_{ad} \cap B_{\delta}^{X'}(h^*)$ for some $\delta >0$. A necessary condition for $h^*\in X_{ad}$ to be a local solution (in $X$) to 
\eqref{eq:SAproblem} is given by the variational inequality 
\begin{equation}\label{eq:OC}
D j(h^*)[h-h^*] \geq 0 \quad \forall h \in X_{ad}.
\end{equation}
In the event that $j$ is convex on $X_{ad}$, this condition is 
also sufficient and $h^*$ is even a global solution. Notice that since $H$ is the weaker space, any local solution of \eqref{eq:SAproblem} in $H$ is also a local solution in $X$; i.e., local solutions in $H$ also satisfy the condition \eqref{eq:OC}. Our strategy of handling the two-norm discrepancy will be to show that our method converges in $H$ to a local solution.

In our application, it is only possible to show continuity and 
coercivity of $D^2 j$ on the weaker space $H$. In the abstract 
setting, this translates to the following assumption, which is
motivated by \cite{eppler2007convergence}. 
\begin{assumption}\label{assumption:j}
The functional $j \colon O\to\R$ is twice continuously 
differentiable. We assume that for every $h \in X_{ad},$ 
the second derivative $D^2 j(h)\in 
\mathcal{L}(X,X^*)$ extends continuously to a bilinear 
form on $H\times H$, i.e., there exists $C_S >0$ such that 
\begin{equation}\label{eq:assumption-continuity-bilinear-form}
    |D^2 j(h)[q_1,q_2]| \leq C_S \lVert q_1 \rVert_H \lVert q_2 \rVert_H  
    \ \forall q_1, q_2 \in H.
\end{equation}
Additionally, we assume that for there exists $c_E >0$ such that 
the following strong coercivity condition is satisfied for every $h \in X_{ad}$:
\begin{equation}
    \label{eq:strong-coercivity} 
    D^2j(h)[q,q] \ge c_E \lVert q\rVert_H^2 
    \quad \forall q \in H.
\end{equation}
\end{assumption}

We note that this assumption does not require that the 
objective $j$ extends continuously from $X$ to $H$, as 
this is not satisfied by our problem. Twice 
continuous differentiability of $j$  provides Lipschitz 
continuity of the corresponding gradient with respect to the 
$(X^*,X)$-duality. We note that since the second  derivative is continuously extendable to a bilinear form on $H \times H$, we have Lipschitizianity 
of $Dj$ on the weaker space, as was shown in \cite{eppler2007convergence}. 

The inequality \eqref{eq:strong-coercivity} implies strong convexity with respect to $H$. Indeed, a Taylor expansion around any point $h' \in X_{ad}$ gives \[j(h'+q)=j(h')+Dj(h')[q]+\tfrac{1}{2}D^2j(\eta)[q,q]\] with $\eta$ being a point on the segment between $h'$ and $h'+q$. Therefore,
 \begin{equation}
 \label{eq:strong-convexity}
 j(h'+q) - j(h') \geq Dj(h')[q] + \frac{c_E}{2} \lVert q \rVert^2_H
 \end{equation}
 provided that $h'+q \in X_{ad}$. If $h'=h^*$ is a (local) optimum, we have using \eqref{eq:OC} that
 \begin{equation}
  \label{eq:strong-convexity-2}
 j(h)-j(h^*) \geq \frac{c_E}{2} \lVert h-h^*\rVert_H^2 \quad \forall h \in X_{ad}.
 \end{equation}
In particular, $h^*$ is a strict minimizer in $X$ since $j(h) > j(h^*)$ for $h\neq h^*.$
Since the coercivity space $H$ differs from the stronger space $X$ over which $j$ is continuous, we cannot expect to have strong convexity with respect to the stronger space. On the other hand, it is possible in certain cases to show that a strict minimizer $h^*$ in $X$ is also one with respect to the weaker topology; see \cite{Casas2012a}.

\paragraph{The free boundary problem} We shall now look in more detail at how the free boundary problem fits into this framework. The space $H$ is the fractional 
Sobolev space $H^{1/2}_{per}([0,2\pi])$, the energy space of 
the shape Hessian. The smaller space $X$ is the Sobolev space $H^4_{per}
([0,2\pi])$, since it is continuously embedded into $C^{3,2\alpha}_{per}([0,2\pi])$
for all $\alpha\in (0,1/4)$ by the Sobolev embedding theorem. The 
set $X_{ad}$ is 
$$
X_{ad} =\{ h\in X \mid \forall \theta\in[0,2\pi]: 
r_{\underline{\Gamma}}\leq h(\theta)\leq r_{\overline{\Gamma}},\ 
(h+h'')(\theta)\ge 0, \text{ and } \|h\|_X \leq M_\Gamma\}. 
$$
Concerning \Cref{assumption:j}, we recall that the inner boundary is modeled as random with $\xi = \sigma$. The objective is therefore 
\[j(h) = \E[\mathcal{J}(h,\sigma(\cdot)]. \]
The continuity estimate \eqref{eq:assumption-continuity-bilinear-form}
is obvious by \eqref{shapehessian} as $\mathcal{K}_\Gamma\times 
\mathcal{S}_\Sigma$ is a compact set, see also \cite{EP06}. 
For \eqref{eq:strong-coercivity}, we have 
from \eqref{eq:coercivity-parametrized} that
\begin{equation}
\label{eq:almost-sure-lower-bound}
D^2 \mathcal{J}(h,\sigma(\cdot))[q,q] \ge c_E \lVert q\rVert_H^2 
    \quad \forall q \in H \text{ a.s. }
\end{equation}
Using the fact that $\esssup_{\omega \in \Omega} \lVert \sigma(\omega) \rVert_{\mathcal{C}^{2,2\alpha}}<\infty$, it is straightforward to argue that $D^2j(h) = \E[D^2\mathcal{J}(h,\sigma(\cdot))]$. Applying the expectation on both sides of \eqref{eq:almost-sure-lower-bound} yields  \eqref{eq:strong-coercivity}.

\subsection{Stochastic gradient method}
\label{sec:SGM}

Let $\pi_{X_{ad}} \colon X \rightarrow X_{ad}$ be a projection onto the set $X_{ad}$, defined by
\[
\pi_{X_{ad}}(h) = \argmin_{w \in X_{ad}} \, \lVert h-w\rVert_X, 
\]
which is well-defined and single-valued since $X_{ad} \subset X$ is assumed to be nonempty, closed, and convex.   We assume that it is possible to compute an approximation of the gradient in the form of a \textit{stochastic gradient} $G\colon X \times \Xi \rightarrow X$, which is defined as the (parameterized) Riesz representative of the mapping $D\mathcal{J}(\cdot,\hat{\xi}) \colon X \rightarrow X^*$, i.e., we have for every $\hat{\xi} \in \Xi$ that
\begin{equation}
\label{eq:grad_projection}
(G(h,\hat{\xi}),q)_X = \langle D\mathcal{J}(h,\hat{\xi}),q\rangle_{X^*,X} \quad \forall (h,q) \in X_{ad}\times X.
\end{equation}
We use the notation $\nabla j$ for the  gradient of $j$ in $X$, i.e., $(\nabla j(h),q)_{X} = \langle Dj(h),q\rangle_{X^*,X}$, where $h,q \in X$. 
The projected stochastic gradient method relies on a recursion of the form
\begin{equation}
\label{eq:SGM-recursion}
h_{n+1} := \pi_{X_{ad}}(h_n - t_n G(h_n,\xi_n)),
\end{equation}
where $h_1 \in X_{ad}$ and $\xi_n$ is randomly sampled from the law $\mathbb{P}\circ\xi^{-1}$ independently of previous samples $\xi_1, \dots, \xi_{n-1}.$ We require that the step sizes given in \eqref{eq:SGM-recursion} satisfy the Robbins--Monro rule from the original paper \cite{Robbins1951} on stochastic approximation:
 \begin{equation}\label{eq:RobbinsMonroStepSize}
t_n \geq 0, \quad \sum_{n=1}^\infty t_n = \infty, \quad  \sum_{n=1}^\infty t_n^2 < \infty.
 \end{equation}

We will show that the recursion \eqref{eq:SGM-recursion} with the step sizes \eqref{eq:RobbinsMonroStepSize} converges using similar arguments to those used in \cite{Geiersbach2019a}. Note that the convergence result there applies to problems formulated over a single Hilbert space (without the two-norm discrepancy) and so cannot be immediately used for our setting. Here, we also work with assumptions that are verifiable for our application. For completeness, therefore, we provide a proof. 

First, we recall some concepts that will be of use in the proof. A filtration is a sequence $\{ \mathcal{F}_n\}$ of sub-$\sigma$-algebras of $\mathcal{F}$ such that {$\mathcal{F}_1 \subset \mathcal{F}_2 \subset \cdots \subset \mathcal{F}.$} 
Given a Banach space $Y$, we define a discrete $Y$-valued stochastic process as a collection of $Y$-valued random variables indexed by $n$, in other words, the set $\{ \beta_n \mid \Omega \rightarrow Y \mid n \in \mathbb{N}\}.$ 
The stochastic process is said to be adapted to a filtration $\{ \mathcal{F}_n \}$ if and only if $\beta_n$ is $\mathcal{F}_n$-measurable for all $n$. Suppose $\mathcal{B}(Y)$ denotes the set of Borel sets of $Y$.
The natural filtration is the filtration generated by the sequence $\{\beta_n\}$ and is given by $\mathcal{F}_n = \sigma(\{\beta_1, \dots ,\beta_n\}) = \{ \beta_i^{-1}(B)\mid B \in \mathcal{B}(Y), i = 1, \dots, n\}$.
If for an event $F \in \mathcal{F}$ we have that $\mathbb{P}(F) = 1$, we say $F$ occurs almost surely (a.s.).
For an integrable random variable $\beta\colon \Omega \rightarrow \R$, the conditional expectation is denoted by $\E[\beta | \mathcal{F}_n]$, which is itself a random variable that is $\mathcal{F}_n$-measurable and which satisfies $\int_A \E[\beta | \mathcal{F}_n](\omega) \D \pP(\omega) = \int_A \beta(\omega) \D \pP(\omega)$ for all $A \in \mathcal{F}_n$.  

To demonstrate convergence, we will apply the following lemma.

\begin{lemma}[Robbins--Siegmund \cite{Robbins1971}]\label{lemma:convegence_nonnegative_supermartingales}
Assume that $\{\mathcal{F}_n\}$ is a filtration and $v_n$,
$a_n$, $b_n$, $c_n$ nonnegative random variables adapted to $\{\mathcal{F}_n\}$. If
$$\mathbb{E} [v_{n+1}| \mathcal{F}_n ] \leq v_n(1 + a_n) + b_n - c_n \mbox{ a.s. }$$
and $\sum_{n=1}^{\infty} a_n < \infty, \, \sum_{n=1}^{\infty} b_n < \infty$ a.s.,
then with probability one, $\lbrace v_n \rbrace $ is
convergent and $\sum_{n=1}^{\infty} c_n < \infty.$
\end{lemma}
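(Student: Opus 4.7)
The plan is to reduce the claim to Doob's convergence theorem for supermartingales bounded below by an explicit change of variables that absorbs both the multiplicative perturbation $(1+a_n)$ and the additive error $b_n$. First I would set $A_n := \prod_{k=1}^{n-1}(1+a_k)$ with $A_1 := 1$. Since $\sum_n a_n < \infty$ a.s., the elementary bound $\log(1+x)\le x$ shows that $A_n$ converges a.s. to a finite limit $A_\infty \ge 1$; in particular, $A_n\ge 1$ and $1/A_{n+1}\le 1$ on this full-measure event.

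Then I would introduce the auxiliary processes
\[
w_n := \frac{v_n}{A_n} - \sum_{k=1}^{n-1}\frac{b_k}{A_{k+1}},\qquad M_n := w_n + \sum_{k=1}^{n-1}\frac{c_k}{A_{k+1}},
\]
both of which are adapted to $\{\mathcal{F}_n\}$. Dividing the hypothesis by $A_{n+1}=A_n(1+a_n)$ and rearranging gives
\[
\E[w_{n+1}\mid \mathcal{F}_n] \le w_n - \frac{c_n}{A_{n+1}},
\]
so that $\{M_n\}$ is a supermartingale. The bound $1/A_{k+1}\le 1$ together with $\sum_k b_k<\infty$ a.s. shows that $\sum_k b_k/A_{k+1}$ has an a.s. finite limit; combined with $v_n/A_n\ge 0$, this implies that $w_n$, and hence $M_n$, is bounded below by an a.s. finite random variable.

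Doob's convergence theorem for supermartingales bounded below then produces an a.s. finite limit $M_\infty$. The remaining work is to disentangle the three contributions. The partial sums $S_n := \sum_{k=1}^{n-1} c_k/A_{k+1}$ are nondecreasing and nonnegative, hence converge in $[0,\infty]$; however, $w_n = M_n - S_n$ is bounded below and $M_n$ converges, so $S_n$ must be bounded above and therefore has a finite limit. Because $A_{k+1}\le A_\infty<\infty$, this gives $\sum_k c_k<\infty$ a.s. Finally, the identity $v_n/A_n = M_n - S_n + \sum_{k=1}^{n-1}b_k/A_{k+1}$ exhibits $v_n/A_n$ as a sum of three a.s. convergent sequences, so $v_n/A_n$ converges a.s.; multiplying by the deterministic convergent quantity $A_n$ yields a.s. convergence of $\{v_n\}$.

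The main obstacle is purely a bookkeeping one: choosing the correct shift $\sum b_k/A_{k+1}$ that turns the hypothesis into a genuine supermartingale inequality, and then extracting the two independent conclusions (convergence of $v_n$ and summability of $c_n$) from the single convergence statement supplied by Doob's theorem, which is achieved by playing the monotonicity of $S_n$ against the lower bound on $w_n$.
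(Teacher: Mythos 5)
The paper does not prove this lemma at all --- it is quoted verbatim from Robbins and Siegmund \cite{Robbins1971} --- so there is no internal proof to compare against. Your argument is the classical one (the same change of variables $v_n/A_n$ minus a compensating sum that Robbins and Siegmund use), and the algebra is right: $A_{n+1}$ is $\mathcal{F}_n$-measurable, so dividing the hypothesis by it inside the conditional expectation is legitimate, and the final disentangling of $M_n$, $S_n$ and the $b$-sum is correct. (Minor slip: $A_n$ is a random, not deterministic, quantity, but it converges a.s.\ to a finite limit, which is all you use.)

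There is, however, one genuine gap: the step ``Doob's convergence theorem for supermartingales bounded below then produces an a.s.\ finite limit.'' Doob's theorem requires $\sup_n \E[M_n^-]<\infty$, which is implied by a lower bound that is a \emph{constant} (or an integrable random variable), but not by a lower bound that is merely an a.s.\ finite random variable. Under the hypotheses of the lemma, $\sum_k b_k$ is only assumed finite almost surely, not integrable, so your lower bound $M_n \ge -\sum_k b_k/A_{k+1} \ge -\sum_k b_k$ does not put you in the scope of Doob's theorem; the upcrossing estimate genuinely needs control of $\E[M_n^-]$. The standard repair --- and the one Robbins and Siegmund use --- is localization: since your lower-bounding process $\sum_{k=1}^{n-1} b_k/A_{k+1}$ is adapted and nondecreasing, $\tau_a := \inf\{n : \sum_{k=1}^{n} b_k/A_{k+1} > a\}$ is a stopping time, the stopped process $M_{n\wedge\tau_a}$ is a supermartingale bounded below by the constant $-a$ (and integrable, which also resolves the fact that the lemma never assumes $v_n\in L^1$), hence converges a.s.\ by Doob. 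On the full-measure event $\{\sum_k b_k<\infty\}$ one has $\tau_a=\infty$ for some integer $a$, so taking the countable union over $a\in\mathbb{N}$ yields a.s.\ convergence of $M_n$ itself, after which the rest of your argument goes through unchanged.
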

We will also need the following result.
\begin{proposition}[\cite{Geiersbach2019a}]
\label{prop:technicalproposition}
Let $\{\tau_n \}$ be a nonnegative deterministic sequence and $\{ \beta_n\}$ a nonnegative random sequence in $\mathbb{R}$ adapted to $\{\mathcal{F}_n$\}. Assume that
$\sum_{n=1}^\infty \tau_n = \infty$ and  $\mathbb{E} [\sum_{n=1}^\infty \tau_n \beta_n] < \infty$. Moreover assume that 
$\beta_{n} - \mathbb{E}[\beta_{n+1}|\mathcal{F}_n] \leq \gamma \tau_n$ a.s.~for all $n$ and some $\gamma > 0$. Then
$$\beta_n  \hbox{ converges to } 0  \hbox{ a.s. }$$
\end{proposition}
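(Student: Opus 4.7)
My plan is to prove $\beta_n \to 0$ almost surely by establishing $\liminf_n \beta_n = 0$ and $\limsup_n \beta_n = 0$ separately. The first step is straightforward: from $\E[\sum_n \tau_n \beta_n] < \infty$ and Tonelli's theorem, $\sum_n \tau_n \beta_n < \infty$ almost surely, and since $\sum_n \tau_n = \infty$ with $\beta_n \ge 0$, this rules out $\liminf_n \beta_n \ge \epsilon$ on any event of positive probability (otherwise the weighted series would diverge there).

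For the second stage, I would exploit the one-sided conditional bound to manufacture a submartingale. Setting $W_n := \beta_n + \gamma \sum_{j=1}^{n-1}\tau_j$, the hypothesis $\E[\beta_{n+1}\mid\mathcal{F}_n] \ge \beta_n - \gamma\tau_n$ immediately yields $\E[W_{n+1}\mid\mathcal{F}_n] \ge W_n$, so $\{W_n\}$ is a nonnegative submartingale. Fix $\epsilon>0$ and introduce the $\{\mathcal{F}_n\}$-stopping times $\{n_k\}$ and $\{m_k\}$ that mark, respectively, the $k$-th up-crossing of $\beta_n$ above the level $2\epsilon$ and the next return of $\beta_n$ below $\epsilon$, with the convention that they equal $+\infty$ if no such index exists. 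A suitable optional sampling argument applied to $W_n$ then yields
\begin{equation*}
\beta_{n_k} - \E[\beta_{m_k}\mid \mathcal{F}_{n_k}] \le \gamma\,\E\Bigl[\sum_{i=n_k}^{m_k-1}\tau_i\,\Big|\,\mathcal{F}_{n_k}\Bigr] \quad\text{on } \{n_k<\infty\}.
\end{equation*}
The left-hand side is at least $\epsilon$ there, because $\beta_{n_k}>2\epsilon$ while $\beta_{m_k}\le\epsilon$ pointwise. Taking expectations and summing over $k$ gives
\begin{equation*}
\frac{\epsilon}{\gamma}\sum_k \pP(n_k<\infty) \le \sum_k \E\Bigl[\mathbf{1}_{\{n_k<\infty\}}\sum_{i=n_k}^{m_k-1}\tau_i\Bigr] \le \E\Bigl[\sum_i \tau_i\,\mathbf{1}_{\{\beta_i>\epsilon\}}\Bigr] \le \frac{1}{\epsilon}\,\E\Bigl[\sum_i \tau_i\beta_i\Bigr] < \infty,
\end{equation*}
using that the excursion intervals $[n_k,m_k)$ are disjoint and $\beta_i>\epsilon$ on each of them. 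Since the events $\{n_k<\infty\}$ are decreasing in $k$, summability of their probabilities forces $\lim_k \pP(n_k<\infty) = \pP(\bigcap_k \{n_k<\infty\}) = 0$, so only finitely many up-crossings above $2\epsilon$ occur almost surely. Taking $\epsilon=1/m$ and intersecting over $m\in\mathbb{N}$ completes the proof of $\limsup_n\beta_n=0$ a.s.

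The principal technical obstacle is the optional sampling step: without an a priori upper bound on $\beta_n$, the submartingale $W_n$ is unbounded and $n_k,m_k$ need not have finite expectation, so the classical optional stopping theorem does not apply directly. I would resolve this by first applying optional sampling to the bounded stopping times $n_k\wedge N$ and $m_k\wedge N$ and then passing to the limit $N\to\infty$, using that on $\{n_k<\infty\}$ one also has $m_k<\infty$ almost surely (an easy consequence of $\liminf_n\beta_n=0$ from the first stage), so the truncated quantities converge to the untruncated ones. Nonnegativity of $\tau_i$ makes monotone convergence applicable on the right-hand side, and Fatou's lemma for conditional expectations controls the passage to the limit on the left; these let the inequality survive in the limit on the event of interest, which is all that is needed.
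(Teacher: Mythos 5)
The paper itself does not prove this proposition --- it is imported verbatim from the cited reference --- so there is no in-paper proof to compare against; I am assessing your argument on its own terms. Your skeleton is the standard one (and essentially the one used in the cited source): obtain $\liminf_n \beta_n = 0$ a.s.\ from Tonelli and $\sum_n \tau_n = \infty$; observe that $W_n = \beta_n + \gamma\sum_{j<n}\tau_j$ is a submartingale; and use optional sampling over each down-crossing excursion from level $2\epsilon$ to level $\epsilon$ to show that every such excursion costs at least $\epsilon/\gamma$ in conditionally expected $\sum\tau_j$, so that the disjointness of the excursion intervals, the bound $\beta_j>\epsilon$ on them, and $\E[\sum_j\tau_j\beta_j]<\infty$ force $\sum_k\pP(n_k<\infty)<\infty$ and hence finitely many crossings. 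All of that bookkeeping, including $m_k<\infty$ a.s.\ on $\{n_k<\infty\}$ and the monotonicity of the events $\{n_k<\infty\}$, is correct.

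The gap sits exactly at the step you flagged, and the tool you propose there does not work. After optional sampling at the bounded times $n_k\wedge N\le m_k\wedge N$ you must pass to the limit in the term $\E[\beta_{m_k\wedge N}\mid\mathcal{F}_{n_k}]$, and what you need is an \emph{upper} bound, $\liminf_N\E[\beta_{m_k\wedge N}\mid\mathcal{F}_{n_k}]\le\E[\beta_{m_k}\mid\mathcal{F}_{n_k}]\le\epsilon$. Conditional Fatou gives the reverse inequality $\E[\beta_{m_k}\mid\mathcal{F}_{n_k}]\le\liminf_N\E[\beta_{m_k\wedge N}\mid\mathcal{F}_{n_k}]$, which is useless here; a reverse-Fatou statement would require a dominating integrable bound on $\beta_{m_k\wedge N}=\beta_N$ on $\{n_k\le N<m_k\}$, and no such bound is available (mass can escape, since the hypotheses do not make $\{\beta_N\}$ uniformly integrable). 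The standard repair is a subsequence argument rather than Fatou: since $\sum_n\tau_n\E[\beta_n]<\infty$ and $\sum_n\tau_n=\infty$, one has $\liminf_N\E[\beta_N]=0$, so there is a subsequence $\{N_i\}$ with $\E[\beta_{N_i}]\to0$; take total expectations of the truncated optional-sampling inequality and let $N\to\infty$ along $\{N_i\}$, absorbing the offending term via $\E[\beta_{N_i}\mathbf{1}_{\{n_k\le N_i<m_k\}}]\le\E[\beta_{N_i}]\to0$, which yields $\epsilon\,\pP(n_k<\infty)\le\gamma\,\E[\mathbf{1}_{\{n_k<\infty\}}\sum_{j=n_k}^{m_k-1}\tau_j]$ and lets the rest of your argument proceed. (A minor additional point: integrability of each $\beta_n$, needed even to speak of a submartingale, only follows from the hypotheses at indices with $\tau_n>0$.)
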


To ensure convergence of \eqref{eq:SGM-recursion}, we make the following assumptions, which is a slight modification of those used in~\cite[Theorem 3.6]{Geiersbach2019a}. Since in our application, $X_{ad}$ is bounded, we can reasonably impose a uniform bound on the second moment term in Assumption~\ref{assumption:gradient} (iii) instead of the growth condition used in \cite{Geiersbach2019a}.
\begin{assumption}\label{assumption:gradient}
Let $\{\mathcal{F}_n \}$ be an increasing sequence of $\sigma$-algebras.  For each $n$, there exist $b_n$, $w_n$ with
$$b_n = \E[G(h_n,\xi_n) | \mathcal{F}_n] - \nabla j(h_n), \quad w_n = G(h_n, \xi_n) - \E[G(h_n, \xi_n) | \mathcal{F}_n],$$ 
which satisfy the following assumptions:\\ (i) $ h_n$ and $b_n $ are $\mathcal{F}_n$-measurable; (ii) for $K_n:=\esssup_{\omega \in \Omega} \lVert b_n(\omega)\rVert_X$ we have that \mbox{$ \sum_{n=1}^\infty t_n K_n< \infty$} and \mbox{$\sup_n K_n < \infty$}; (iii) there exists $M\geq 0$ such that $\E[\lVert G(h,\xi)\rVert_X^2] \leq M$ for all $r \in X_{ad}$.
\end{assumption}
The sequence $b_n$ is a bias term that can be neglected if the stochastic gradient can be computed exactly for $\xi_n$. 
The following result follows using similar arguments to those made in\cite[Theorem 3.6]{Geiersbach2019a}. The main difference is that strong convergence occurs with respect to the weaker norm $H$, even though the iterates belong to $X$. Here, some arguments are simplified since we assume that $X_{ad}$ is bounded. 

\begin{theorem}\label{lemma:convergenceofPSG}
Suppose that Assumption~\ref{assumption:j} and Assumption~\ref{assumption:gradient} hold. If the sequence of step sizes satisfy \eqref{eq:RobbinsMonroStepSize}, then for iterates defined by the recursion \eqref{eq:SGM-recursion}, we have
\begin{enumerate}
 \item $\{j(h_n) \}$ converges a.s.\,and $\lim_{n \rightarrow \infty} j(h_n) = j(h^*)$,
 \item $\{h_n\}$ almost surely converges weakly in $X$ to $h^*$, and
 \item $\{h_n \}$ almost surely converges strongly in $H$ to $h^*$. 
\end{enumerate}
\end{theorem}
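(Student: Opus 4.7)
The plan is to mimic the Robbins--Monro style analysis of \cite{Geiersbach2019a} but with care taken at every step to respect the two-norm discrepancy: all martingale bookkeeping must happen in $X$ (where the nonexpansivity of $\pi_{X_{ad}}$ holds) while the coercive term that drives convergence comes from the $H$-strong convexity \eqref{eq:strong-convexity}. I would start by squaring the nonexpansivity estimate to obtain
\[
\lVert h_{n+1}-h^*\rVert_X^2\le \lVert h_n-h^*\rVert_X^2-2t_n(G(h_n,\xi_n),h_n-h^*)_X+t_n^2\lVert G(h_n,\xi_n)\rVert_X^2,
\]
then split $G(h_n,\xi_n)=\nabla j(h_n)+b_n+w_n$ according to \Cref{assumption:gradient} and take $\E[\cdot\mid\mathcal{F}_n]$ so that the centered term $w_n$ vanishes. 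The bias contribution is controlled by Cauchy--Schwarz together with the $X$-diameter $D_X:=\mathrm{diam}_X(X_{ad})<\infty$, whereas the deterministic gradient pairing is bounded below via the Taylor consequence of \eqref{eq:strong-convexity}, namely $Dj(h_n)[h_n-h^*]\ge (j(h_n)-j(h^*))+\tfrac{c_E}{2}\lVert h_n-h^*\rVert_H^2$. Collecting terms leads to a Robbins--Siegmund-type inequality
\[
\E\!\left[\lVert h_{n+1}-h^*\rVert_X^2\mid\mathcal{F}_n\right]\le \lVert h_n-h^*\rVert_X^2+b_n-2t_n(j(h_n)-j(h^*))-c_E t_n\lVert h_n-h^*\rVert_H^2,
\]
with deterministic residual $b_n=2t_n K_n D_X+t_n^2 M$ summable by \eqref{eq:RobbinsMonroStepSize} and \Cref{assumption:gradient}(ii),(iii). \Cref{lemma:convegence_nonnegative_supermartingales} then yields that $\lVert h_n-h^*\rVert_X^2$ converges almost surely, and, after taking total expectations (using that $\lVert h_n-h^*\rVert_X\le D_X$), also in expectation, the series $\sum t_n(j(h_n)-j(h^*))$ and $\sum t_n\lVert h_n-h^*\rVert_H^2$ are finite. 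Uniqueness of the minimizer $h^*$ is secured by \eqref{eq:strong-convexity-2}; existence follows from continuity and convexity of $j$ on the weakly compact set $X_{ad}$.

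Claims~(1) and~(3) both follow by applying \Cref{prop:technicalproposition} with well-chosen $\beta_n$. For Claim~(3) I would take $\beta_n=\lVert h_n-h^*\rVert_H^2$; nonexpansivity of the $X$-projection gives $\lVert h_{n+1}-h_n\rVert_X\le t_n\lVert G(h_n,\xi_n)\rVert_X$, and the continuous embedding $X\hookrightarrow H$ upgrades this to $\lVert h_{n+1}-h_n\rVert_H\le C_e t_n\lVert G(h_n,\xi_n)\rVert_X$, so by the reverse triangle inequality with the $H$-diameter $D_H$ of $X_{ad}$, conditional Jensen, independence of $\xi_n$ from $\mathcal{F}_n$, and \Cref{assumption:gradient}(iii), one obtains
\[
\beta_n-\E[\beta_{n+1}\mid\mathcal{F}_n]\le 2 D_H C_e\sqrt{M}\,t_n.
\]
Combined with $\E\!\left[\sum t_n\beta_n\right]<\infty$ from the preceding step, \Cref{prop:technicalproposition} gives $\beta_n\to 0$ almost surely, which is Claim~(3). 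Claim~(1) is proved identically with $\beta_n=j(h_n)-j(h^*)\ge 0$, the one-step increment bound now using Lipschitz continuity of $j$ on $X_{ad}$ inherited from twice continuous differentiability on the open set $O\supset X_{ad}$ and boundedness of $X_{ad}$. Claim~(2) is then a short subsequence argument: $\{h_n\}$ is bounded in the reflexive Hilbert space $X$ and hence weakly precompact; continuity of $X\hookrightarrow H$ makes every weak $X$-limit a weak $H$-limit, so the strong $H$-convergence in Claim~(3) pins every subsequential weak $X$-limit down to $h^*$, yielding $h_n\rightharpoonup h^*$ in $X$ almost surely.

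The principal obstacle is the two-norm discrepancy itself: one cannot run Robbins--Siegmund directly with $v_n=\lVert h_n-h^*\rVert_H^2$, because the iteration is not nonexpansive in $H$, and one cannot close the strong-convexity argument purely in $X$, because coercivity is only available in $H$. The workaround is exactly the split above, performing the martingale identity in the stronger norm (where the projection behaves well) but extracting the dissipative contribution in the weaker norm (where \eqref{eq:strong-convexity} is coercive), and bridging the two norms only at the level of the one-step increment control required by \Cref{prop:technicalproposition}. Consistent with the paper's earlier discussion, I do not expect a quantitative rate in expectation to fall out of this argument, since the one-step $H$-control provided by the embedding $X\hookrightarrow H$ is not strong enough to emulate the standard strongly-convex Robbins--Siegmund rate argument in $H$.
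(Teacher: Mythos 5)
Your proof is correct and rests on the same machinery as the paper's --- the nonexpansivity recursion in the $X$-norm, the Robbins--Siegmund lemma (\Cref{lemma:convegence_nonnegative_supermartingales}), and \Cref{prop:technicalproposition} to upgrade a summability statement to almost sure convergence --- but it routes claims (2) and (3) differently. The paper drops the coercivity term from the supermartingale inequality, proves convergence of the function values first, obtains weak $X$-convergence from weak lower semicontinuity of $j$ together with uniqueness of the minimizer, and then gets strong $H$-convergence for free from the quadratic growth estimate \eqref{eq:strong-convexity-2}. You instead retain the term $-c_E t_n\lVert h_n-h^*\rVert_H^2$ in the recursion, apply \Cref{prop:technicalproposition} a second time with $\beta_n=\lVert h_n-h^*\rVert_H^2$ (your one-step increment bound via the embedding $X\hookrightarrow H$ and conditional Jensen is sound), and then deduce weak $X$-convergence from the weak--weak continuity of the embedding. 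Your derivation of (3) is more work than necessary --- once $j(h_n)\to j(h^*)$ is known, \eqref{eq:strong-convexity-2} gives (3) in one line --- but it is self-contained and makes the role of the coercivity term explicit; conversely, your argument for (2) is slightly cleaner than the paper's since it avoids invoking weak lower semicontinuity of $j$. Your handling of the bias term via the diameter of $X_{ad}$ (putting $2t_nK_nD_X$ into the additive slot of Robbins--Siegmund rather than the multiplicative one) is an equally valid bookkeeping choice, and your closing remark on why no rate in expectation survives the two-norm discrepancy matches the paper's discussion.
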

\begin{proof}
Recall that since $j$ is strongly convex with respect to $H$, a unique minimum $h^*\in X_{ad}$ exists. We note that $\pi_{X_{ad}}(h^*) = h^*$. Now, with $g_n:=G(h_n,\xi_n)$, we use the nonexpansivity of the projection to obtain
\begin{equation}
\label{eq:first-steps-convergence}
\begin{aligned}
    \lVert h_{n+1} -h^* \lVert_X^2 &= \lVert \pi_{X_{ad}} (h_n - t_n g_n) - \pi_{X_{ad}}(h^*)\rVert_X^2\\
    & \leq \lVert h_n - t_n g_n - h^*\lVert_X^2\\
    &=\lVert h_n - h^* \lVert_X^2 - 2t_n ( g_n, h_n-h^*)_X + t_n^2 \lVert  g_n\rVert_X^2.
\end{aligned}
\end{equation}

Due to (strong) convexity and the fact that $h_n, h^* \in X_{ad}$, we have 
\[ j(h^*) - j(h_n) \geq Dj(h_n)[h^*-h_n] + \frac{c_E}{2}\lVert h^*-h_n\rVert_H^2 \geq Dj(h_n)[h^*-h_n]\]
so that $-(j(h_n) - j(h^*)) \geq -D j(h_n)[h_n-h^*]=- (\nabla j(h_n), h_n-h^*)_X$. Moreover, optimality of $h^*$ gives \eqref{eq:OC}.
Taking the conditional expectation on both sides of \eqref{eq:first-steps-convergence} and applying Cauchy--Schwarz for the bias term, we have
\begin{equation}
\label{eq:conditional-inequality}
\begin{aligned}
\E[\lVert h_{n+1} -h^* \lVert_X^2|\mathcal{F}_n] &\leq \lVert h_n - h^* \lVert_X^2 - 2t_n ( \nabla j(h_n)+b_n, h_n-h^*)_X + t_n^2 \E[\lVert g_n\rVert_X^2|\mathcal{F}_n]\\
&\leq (1+2t_nK_n)\lVert h_n - h^* \lVert_X^2 -2t_n(j(h_n)-j(h^*)) + t_n^2 M.
\end{aligned}
\end{equation}
Lemma~\ref{lemma:convegence_nonnegative_supermartingales} implies that $\{\lVert h_n - h^*\rVert_X \}$ is a.s.~convergent and $\sum_{n=1}^\infty t_n (j(h_n)-j(h^*)) < \infty$ a.s., from which we can conclude that $\liminf_{n \rightarrow \infty} j(h_n) = j(h^*)$ with probability one. To show that in fact $\lim_{n\rightarrow \infty} j(h_n) = j(h^*)$, can use a simpler argument than in \cite[Theorem 3.6]{Geiersbach2019a} since we assumed $X_{ad}$ to be bounded. Indeed, applying expectation \eqref{eq:conditional-inequality} and again using Lemma~\ref{lemma:convegence_nonnegative_supermartingales}, we obtain  that $\sum_{n=1}^\infty t_n \E[j(h_n)-j(h^*)] < \infty$ surely. Convexity of $j$ implies that 
\begin{align*}
    j(h_n)-j(h_{n+1}) &\leq (\nabla j(h_n),h_n-h_{n+1})_X\\
    & \leq \lVert \nabla j(h_n) \rVert_X \lVert h_n-h_{n+1}\rVert_X\\
    & =\lVert \nabla j(h_n) \rVert_X \lVert \pi_{X_{ad}}(h_n)-\pi_{X_{ad}}(h_{n}-t_n g_n)\rVert_X\\
    & \leq \lVert \nabla j(h_n) \rVert_X \lVert t_n g_n\rVert_X.
\end{align*}
Since $X_{ad}$ is bounded, so is $\{h_n\}$, and so there exists a $\tilde{M}>0$ such that $ \lVert \nabla j(h_n) \rVert_X \leq \tilde{M}$ for all $n$. After applying the conditional expectation, we have that 
\[
j(h_n)-\E[j(h_{n+1})|\mathcal{F}_n] \leq t_n \tilde{M} \E[\lVert g_n\rVert_X|\mathcal{F}_n].
\] 
From Jensen's inequality, we see that 
\[
(\E[\lVert g_n\rVert_X|\mathcal{F}_n])^2 \leq \E[\lVert g_n\rVert_X^2|\mathcal{F}_n] \leq M,
\]
from which we can conclude that $j(h_n)-\E[j(h_{n+1})|\mathcal{F}_n] \leq t_n \tilde{M} M.$
Now, we can apply Proposition~\ref{prop:technicalproposition} to conclude that  $\lim_{n\rightarrow \infty} j(h_n) = j(h^*) = 0$, which was the first claim.

For the second claim, we observe an arbitrary trajectory of the random sequence $\{h_n\}$. Since $\{\lVert h_n -h^*\rVert_X^2\}$ is convergent, it is also bounded. In particular, there exists a weak accumulation point $\bar{h} \in X_{ad}$ of the sequence $\{h_n\}$. Let $\{h_{n_k}\}$ be a subsequence such that $h_{n_k}\rightharpoonup_X \bar{h}$. By weak lower semicontinuity of $j$ (which follows from the continuity and convexity of $j$ in $X$), we have
\[
j(\bar{h})\leq \lim_{k\rightarrow \infty} j(h_{n_k}) = j(h^*),
\]
where equality follows by the first part of this proof. Since $h^*$ is the (unique) minimizer, it follows that $j(h^*) = j(\bar{h})$, from which we can conclude that $h^* = \bar{h}$. The accumulation point being unique gives in fact $h_{n} \rightharpoonup_X h^*$ with probability one.

The third claim follows now directly from \eqref{eq:strong-convexity-2}, namely that $\lVert h_n - h^*\rVert_H^2 \rightarrow 0$ a.s.~as $n \rightarrow \infty$.
\end{proof}

\subsection{Discussion}
\label{sec:Discussion}
The above result is stronger than it may seem at first glance. While $j$ is strongly convex with respect to $H$, it is only convex with respect to $X$. On the other hand, the $H$-strong convexity makes $j$ strictly convex in $X$, from which we can conclude that a unique solution exists. We proved that, at least with respect to the weaker norm $H$, we can expect (almost sure) strong convergence of method to this unique minimizer.
We note that the dimension of the underlying random vector $\xi$ appears to be immaterial in the original result from \cite[Theorem 3.6]{Geiersbach2019a}. 

Theorem~\ref{lemma:convergenceofPSG} provides the argument for almost sure convergence of the projected stochastic gradient method. It is natural to ask whether convergence rates (in the mean square) can be derived as in \cite{Geiersbach2020}. Interestingly, because of the two-norm discrepancy, one cannot obtain the expected convergence rates for strongly convex problems. Let us investigate this further.

Note that strong convexity \eqref{eq:strong-convexity} of $j$ implies
\begin{equation}
\label{eq:strong-convexity-3}
 D j(h_n)[h_n-h^*]-D j(h^*)[ h_n - h^*] \geq {c_E} \lVert h_n - h^*\rVert_H^2
\end{equation}
for all $n$. Picking up from the estimate in \eqref{eq:first-steps-convergence}, we write 
\begin{align*}
&\E[\lVert h_{n+1} -h^* \lVert_X^2|\mathcal{F}_n]\\ 
& \qquad \leq  \lVert h_n - h^* \lVert_X^2 - 2t_n ( \E[g_n|\mathcal{F}_n], h_n-h^*)_X + t_n^2 \E[ \lVert  g_n\rVert_X^2|\mathcal{F}_n]\\
&\qquad\leq \lVert h_n - h^* \lVert_X^2 - 2t_n ( \nabla j(h_n)+b_n, h_n-h^*)_X + t_n^2 \E[\lVert  g_n\rVert_X^2|\mathcal{F}_n]\\
&\qquad\leq \lVert h_n - h^* \lVert_X^2 - 2t_n ( \nabla j(h_n) -\nabla j(h^*) +b_n, h_n-h^*)_X + t_n^2 \E[\lVert  g_n\rVert_X^2|\mathcal{F}_n]\\
&\qquad\leq (1+2t_nK_n)  \lVert h_n - h^* \lVert_X^2 
-2t_nc_E \lVert h_n - h^* \lVert_H^2 +2t_n K_n + t_n^2 M.
\end{align*}
Due to the mixture of norms in the final line, we fail to produce the recursion necessary to prove the usual rate for strongly convex functions. If we tried to do the above computations, but in the $H$ norm, we would fail because 
\[
(\nabla j(h_n), h_n-h^*)_H \neq Dj(h_n)[h_n-h^*],
\]
and $\nabla j(h_n)$ is the Riesz representative with respect to $X$, not $H$. 

On the other hand, in the numerical section, we will observe convergence rates that fit the theory for strongly convex functions. Once discretized, the underlying spaces are finite-dimensional, where all norms are equivalent. In the finite-dimensional case with norm $\lVert \cdot \rVert$, if step sizes are chosen such that $t_n = \theta/n$ with $\theta > 1/(2c_E)$, we can expect in the unbiased case (see \cite{Nemirovski2009}):
\[ \E[\lVert h_n - h^* \rVert] \leq \sqrt{\frac{\rho}{n}}\] with $\rho = \max \{\lVert h_1-h^*\rVert^2, \theta^2M(2c_E\theta-1)^{-1} \}$. Moreover, if $h^*$ satisfies $\nabla j(h^*) = 0 $ (i.e., $h^*$ is an interior point of $j$ in finite dimensions), $C_S$-Lipschitz continuity of $j$ gives the following rate of convergence for function values:
\begin{equation*}\label{eq:objectiveerrorbounds}
\E[j(h_n)-j(h^*)] \leq \frac{C_S \rho}{2n}. 
\end{equation*}
We note that in the case where $\nabla j(h^*) = 0$, $C_S$-Lipschitz continuity of $\nabla j$ allows us to obtain a convergence rate of the expected norm of the gradient, since by Lipschitz continuity of $\nabla j$,
\begin{equation}
\label{eq:convergence-rate-gradient}
\E[\lVert \nabla j(h_n) \rVert] \leq C_S\E[\lVert h_n - h^* \rVert] \leq C_S\sqrt{\frac{\rho}{n+\nu}}.
\end{equation}
The convergence rate \eqref{eq:convergence-rate-gradient} will indeed be observed in the numerical simulation, even though we cannot show this in the appropriate function space.

As a final comment, we remark that the assumptions made on measurability in Assumption \ref{assumption:gradient} are not too strong, as shown in the following lemma from \cite{Geiersbach2023}. We recall our assumption that the image space $\Xi$ of the random vector $\xi$ is a complete separable metric space.
\begin{lemma}
\label{lem:measurability}
Suppose $X$ is also assumed to be separable and $\{\mathcal{F}_n\}$ is the natural filtration generated by the stochastic process $\{ \xi_n\}.$ Suppose $G\colon X \times \Xi \rightarrow X$ and $\nabla j\colon X \rightarrow X$ are continuous with respect to the $X$ norm in $X_{ad} \times \Xi$ and $X_{ad}$, respectively. Then $h_n$, defined by the recursion \eqref{eq:SGM-recursion}, as well as the functions $b_n$ and $w_n$ respectively, are adapted to $\mathcal{F}_n$ for all $n.$
\end{lemma}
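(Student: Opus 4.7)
The plan is a straightforward induction on $n$ that propagates measurability through the recursion, combined with standard properties of Bochner-valued conditional expectation. The base case is immediate: $h_1 \in X_{ad}$ is a deterministic initial choice, hence $\mathcal{F}_1$-measurable.

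For the inductive step, I would assume that $h_n$ is $\mathcal{F}_n$-measurable and first deduce that $G(h_n, \xi_n)$ is a measurable $X$-valued random variable. The key observation is that since $X$ and $\Xi$ are both separable metric spaces, the Borel $\sigma$-algebra on the product coincides with the product $\sigma$-algebra, $\mathcal{B}(X \times \Xi) = \mathcal{B}(X) \otimes \mathcal{B}(\Xi)$. Consequently, the pairing $(h_n, \xi_n)$ is measurable as a map into $X \times \Xi$, and composition with $G$ (which is Borel measurable by continuity on $X_{ad} \times \Xi$) yields that $G(h_n, \xi_n)$ is Bochner measurable. The recursion $h_{n+1} = \pi_{X_{ad}}(h_n - t_n G(h_n, \xi_n))$ then inherits the appropriate measurability, since addition and scalar multiplication on $X$ are continuous and the metric projection $\pi_{X_{ad}}$ is Lipschitz continuous (with constant one) as the orthogonal projection onto a nonempty closed convex subset of a Hilbert space. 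This closes the induction.

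The measurability of $b_n$ and $w_n$ then follows by standard arguments. The conditional expectation $\E[G(h_n, \xi_n) \mid \mathcal{F}_n]$ is by construction $\mathcal{F}_n$-measurable; its existence as a Bochner-integrable $X$-valued random variable is guaranteed by the second moment bound in Assumption~\ref{assumption:gradient}(iii) combined with the separability of $X$. The continuity of $\nabla j$ on $X_{ad}$ together with the inductive measurability of $h_n$ gives the $\mathcal{F}_n$-measurability of $\nabla j(h_n)$. Therefore $b_n = \E[G(h_n, \xi_n) \mid \mathcal{F}_n] - \nabla j(h_n)$ is a difference of measurable $X$-valued random variables, and an analogous argument applies to $w_n = G(h_n, \xi_n) - \E[G(h_n, \xi_n) \mid \mathcal{F}_n]$.

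The main delicate point, and the only place where separability of $X$ is used in an essential way, is the joint measurability of $(h_n, \xi_n) \mapsto G(h_n, \xi_n)$. Without separability of at least one factor the Borel $\sigma$-algebra on the product can strictly exceed the product $\sigma$-algebra, and jointly continuous functions need not be measurable with respect to the latter; once separability is available on both sides, the remainder of the proof reduces to a routine chain of composition and linearity arguments.
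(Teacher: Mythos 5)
The paper does not actually prove this lemma; it is quoted verbatim from the cited reference \cite{Geiersbach2023}, so there is no in-paper argument to compare against. Your proof is correct and is the standard one: induction through the recursion, joint measurability of $(h_n,\xi_n)\mapsto G(h_n,\xi_n)$ via $\mathcal{B}(X\times\Xi)=\mathcal{B}(X)\otimes\mathcal{B}(\Xi)$ for separable metric spaces, nonexpansivity of the metric projection onto the closed convex set $X_{ad}$, and well-posedness of the $X$-valued conditional expectation from the second-moment bound. You correctly identify separability as the one essential hypothesis; the only loose end (inherited from the paper's statement, not introduced by you) is the off-by-one convention in the filtration $\mathcal{F}_n=\sigma(\xi_1,\dots,\xi_n)$ versus $\sigma(\xi_1,\dots,\xi_{n-1})$, which affects whether $w_n$ is trivially zero but not the measurability claims themselves.
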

\section{Numerical results}
\label{sec:numerix}
For our numerical setting, we assume that both, the 
interior and the exterior boundary, are starlike and 
use polar coordinates to parameterize them. The associated
exterior radial function is represented by the finite 
Fourier series
\begin{equation}\label{eq:radial}
  r_\Gamma(\theta) = a_{0,\Gamma} + \sum_{\ell=1}^N a_{-\ell,\Gamma}\sin(\ell\theta)
    + a_{\ell,\Gamma}\cos(\ell\theta), \quad \theta\in[0,2\pi],
\end{equation}
and likewise the interior one by
\[
  r_\Sigma(\theta,\omega) - \overline{r}_\Sigma(\theta) = \xi_0(\omega) 
    + \sum_{\ell=1}^N \xi_{-\ell}(\omega)\sin(\ell\theta)
    + \xi_\ell(\omega) \cos(\ell\theta), \quad \theta\in[0,2\pi].
\]
Here, $\overline{r}_\Sigma(\theta)$ is chosen as the
radial function which describes the ellipse with semi-axes 
$0.4$ and $0.2$, while the random variables $\xi_\ell(\omega)
\in\mathcal{U}([-0.5,0.5])$ are uniformly distributed and
independent. We thus have $\E[r_\Sigma(\theta)] = 
\overline{r}_\Sigma(\theta)$.

For our numerical experiments, we employ 17 degrees of freedom 
in \eqref{eq:radial}, which corresponds to $N=8$. Due to the use 
of finite dimensional Fourier series, both boundaries are always 
$C^\infty$-smooth and of bounded curvature provided that the radial 
functions are uniformly bounded from above and below. Especially, 
the Riesz projection \eqref{eq:grad_projection} of the discretized 
gradient is just the identity as the gradient is a member of 
$X$. Also the realization of the projection of the exterior 
boundaries onto the class of convex boundaries becomes obsolete 
as the exterior boundary is always convex during the runs 
of the stochastic gradient method.

The $H^{1/2}$-energy norm of the shape gradient is 
realized by applying an appropriate scaling of 
its Fourier coefficients. The initial guess for the 
exterior boundary is a circle of radius $0.75$, which is 
centered in the origin (compare Figure~\ref{fig:outcome} 
top left). It was not neceessary to impose constraints 
on the parameterization of the outer boundary, as it 
is of bounded curvature since the radial function 
consists only of a few terms. Moreover, we never 
observed difficulties in the numerical simulations which 
is in line with the observations made in \cite{EP06} 
that the optimization problem under consideration 
is convex in the present setting despite of the 
non-convex boundaries. Note that all the details 
of the implementation, which is based on a boundary 
element method, can be found therein, too.

\begin{figure}
\begin{center}
\includegraphics[width=0.4\textwidth]{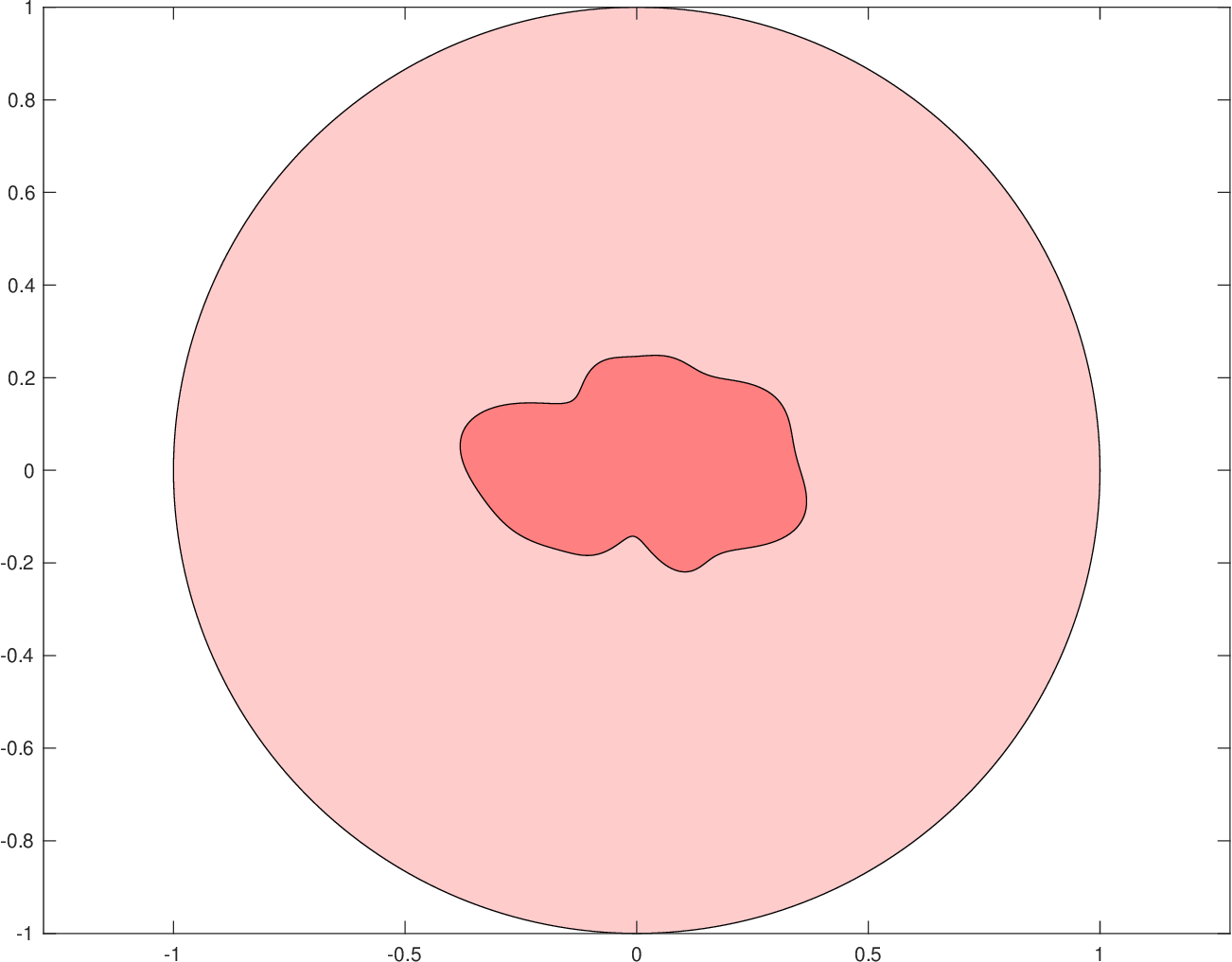}
\includegraphics[width=0.4\textwidth]{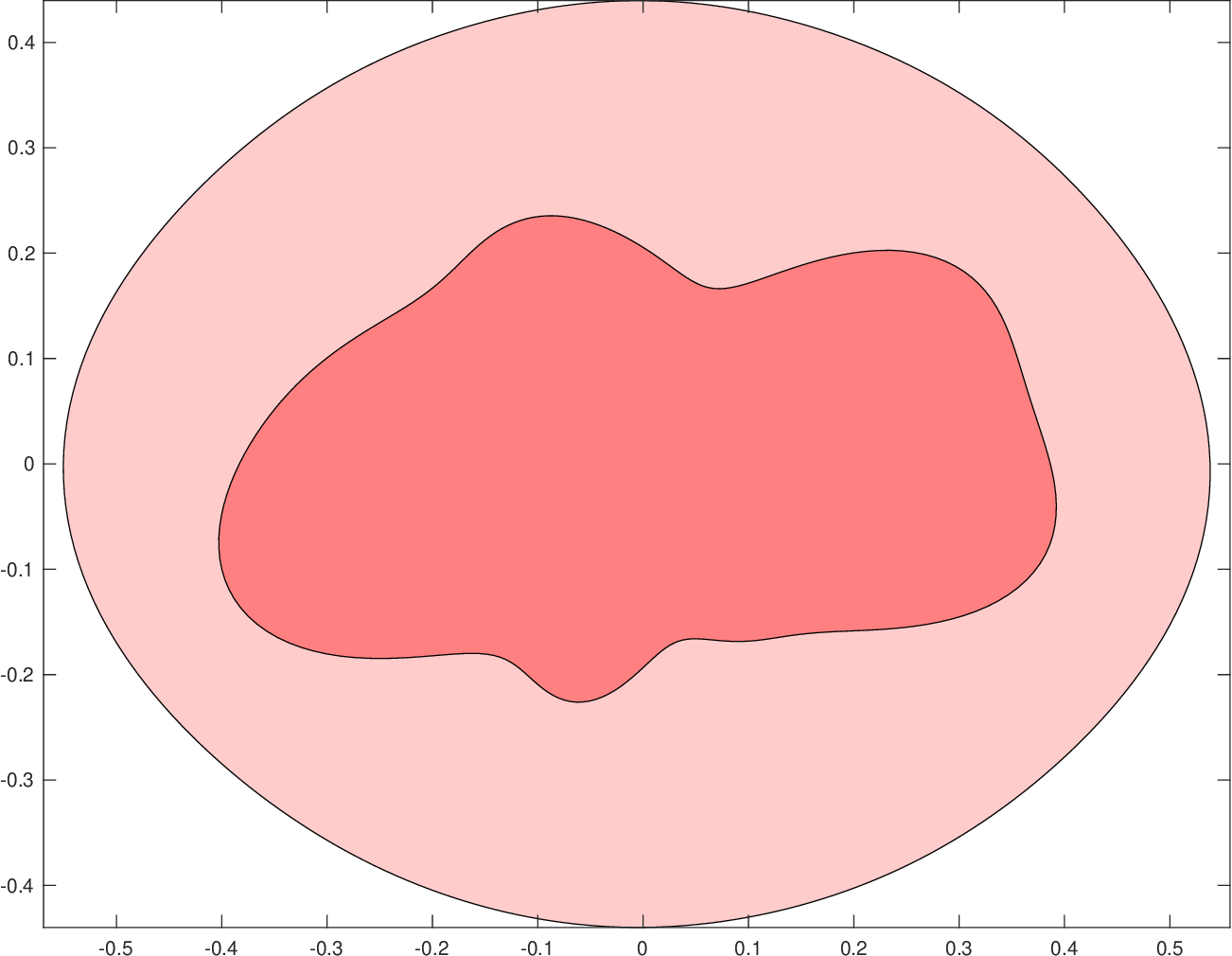}
\includegraphics[width=0.4\textwidth]{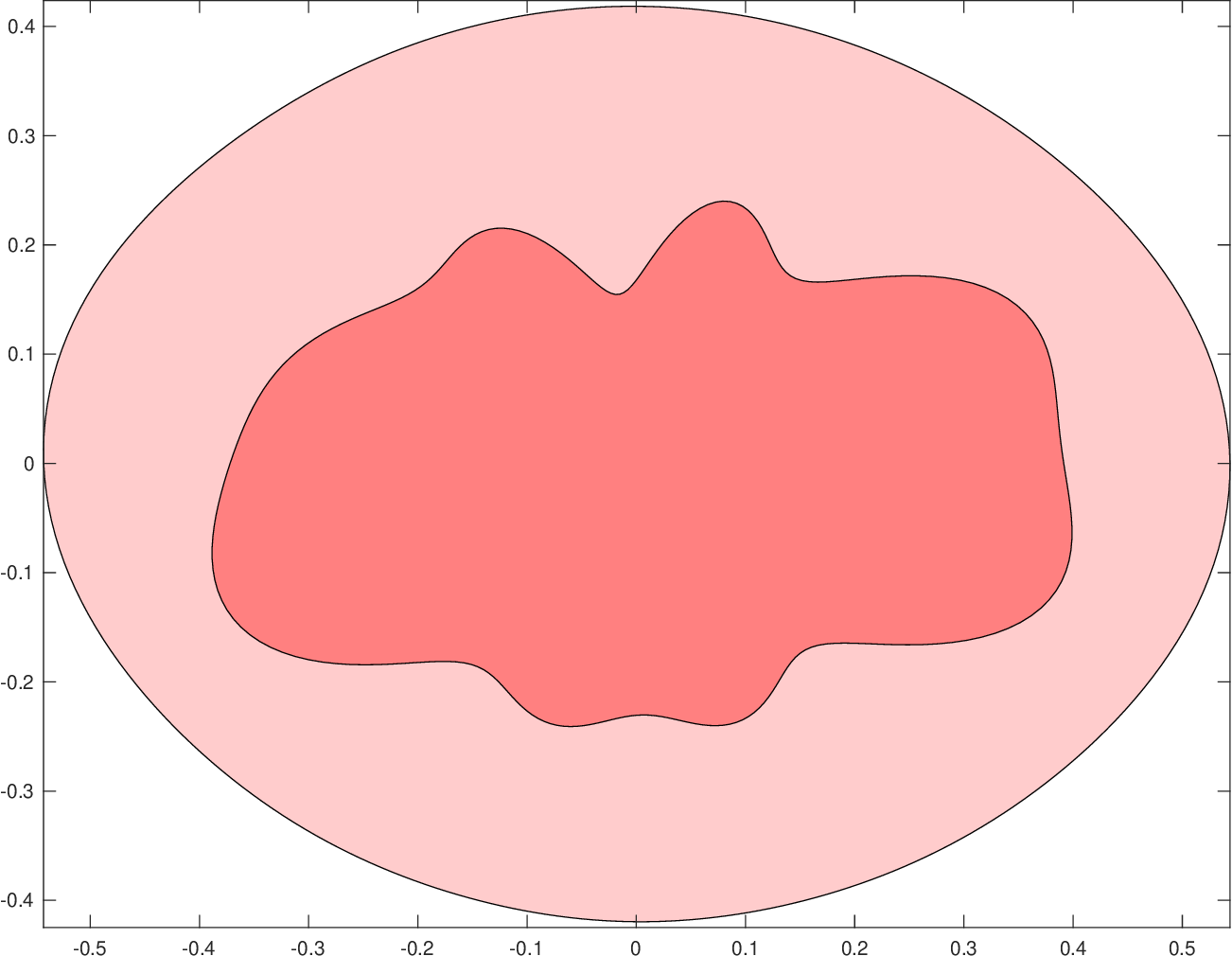}
\includegraphics[width=0.4\textwidth]{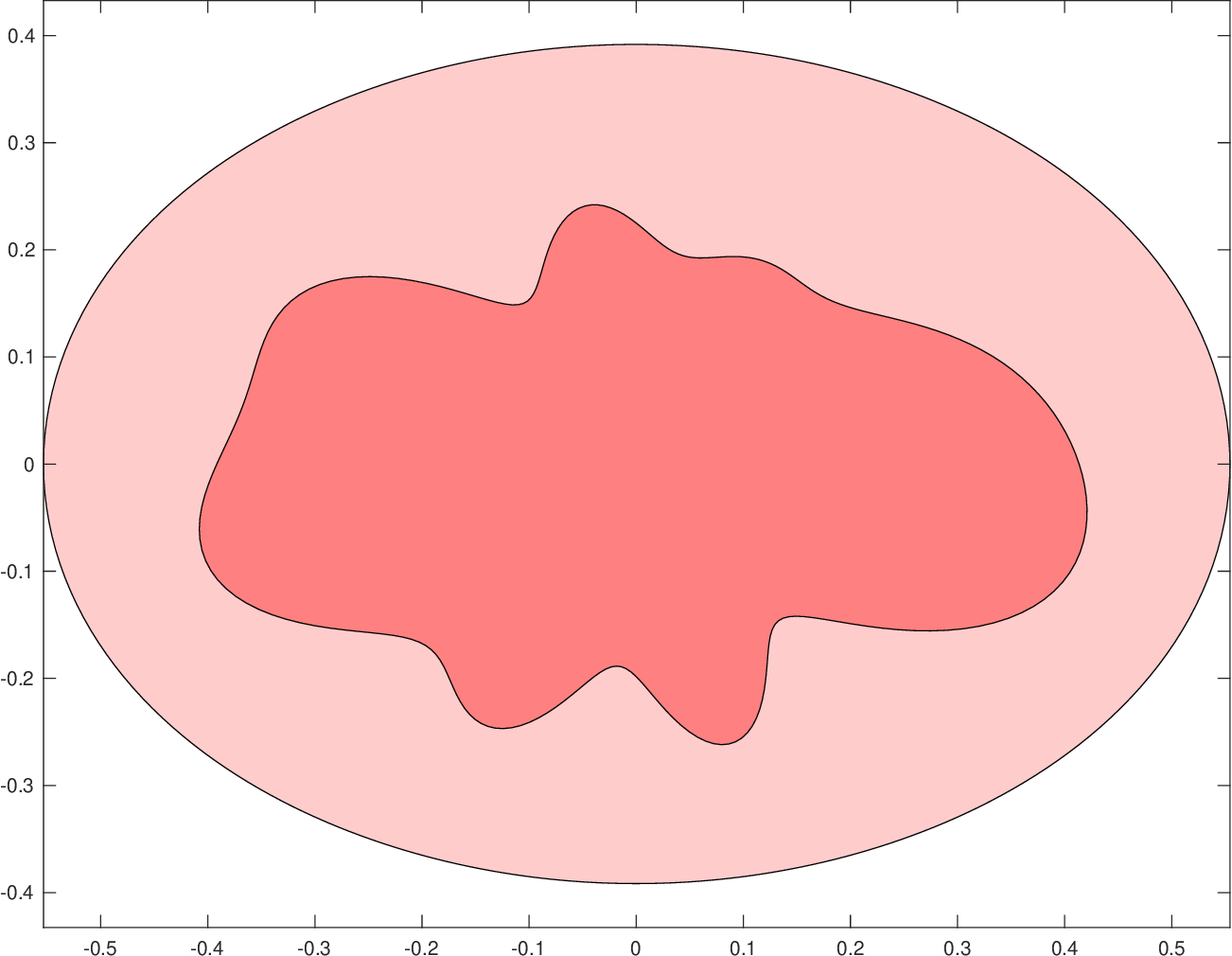}
\caption{\label{fig:outcome}The initial (circular) exterior
boundary (top left), the exterior boundary after 10 iterations 
(top right) after 20 iterations (bottom left), and at after 1000 
iterations (bottom right). The interior boundaries represent 
different random samples.}
\end{center}
\end{figure}

We apply $K$ steps of the stochastic gradient 
method for different numbers of $K$, where the 
step size $t_k$ is in any case chosen in 
accordance with $t_k = \frac{1}{400 k}$. The factor 
$\frac{1}{400}$ is found to be necessary in order to
avoid degeneration of the underlying domains during 
the course of iteration. We observe quite a fast 
convergence of stochastic gradient method towards 
the final ellipse-like outer boundary. After already 
10 iterations, we get the result found in the top right 
plot of Figure~\ref{fig:outcome}, while after 20 iterations
we get we get the result found in the bottom left 
plot of Figure~\ref{fig:outcome}. The boundary 
computed after $K=10\,000$ iterations is found in 
the bottom right plot of Figure~\ref{fig:outcome}.
The interior boundaries seen in Figure~\ref{fig:outcome}
represent different draws of the random interior
boundary.

\begin{figure}
\begin{center}
\includegraphics[width=0.45\textwidth]{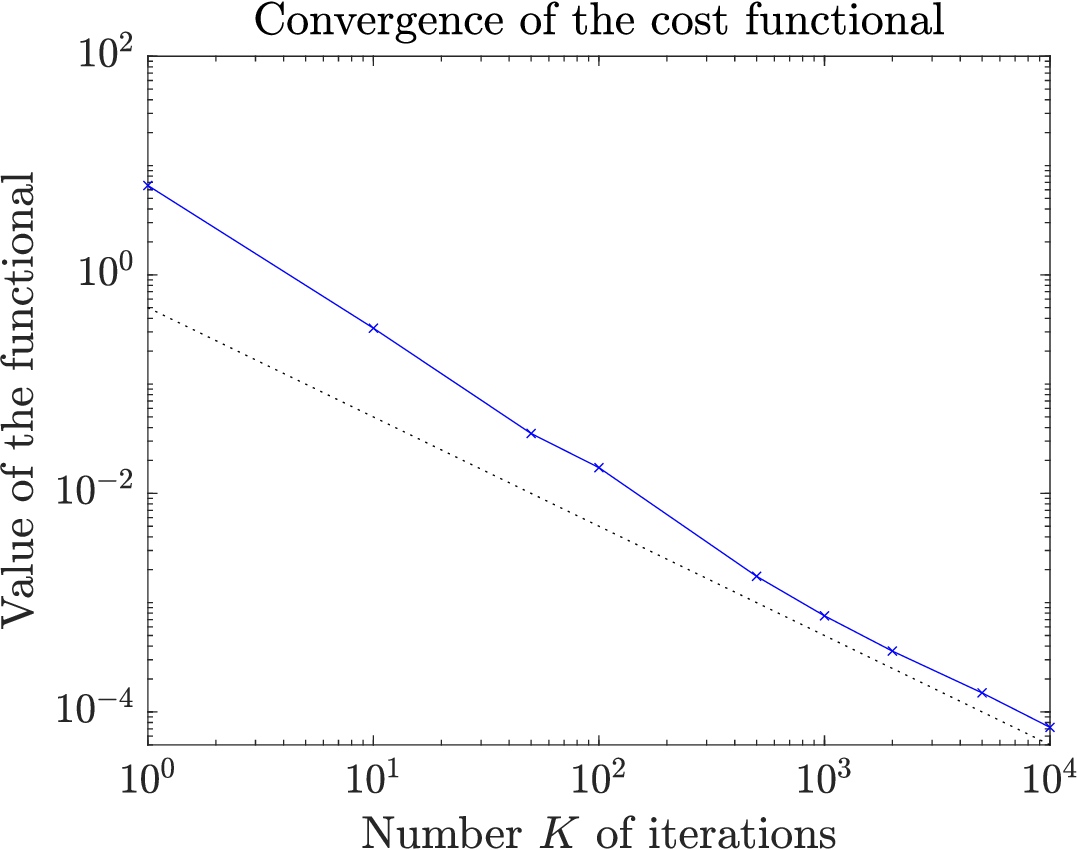}
\includegraphics[width=0.45\textwidth]{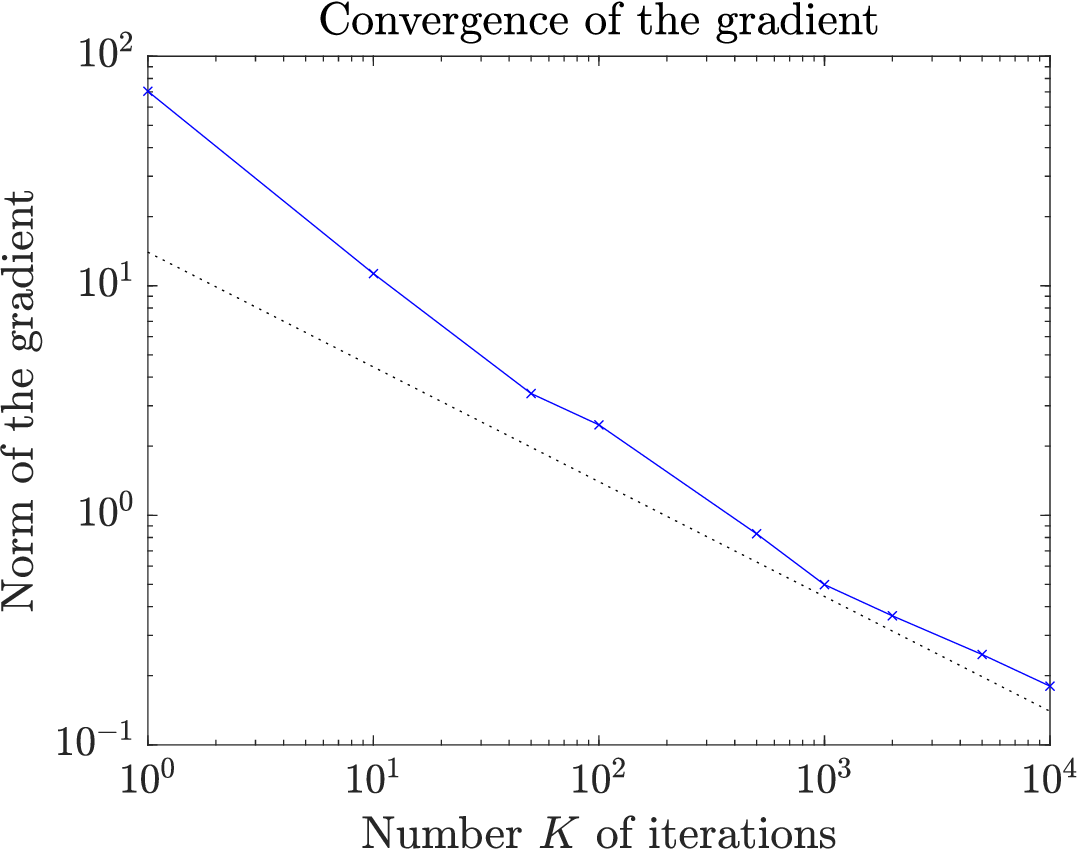}
\caption{\label{fig:convergence} Convergence of the 
stochastic gradient method with respect to the number $K$
of samples. The values of the cost functional are seen on 
the left, the norm of the gradient is seen on the right.
We observe the rate of convergence $K^{-1}$ for the cost 
functional and $K^{-1/2}$ for the gradient, indicated
by the dotted black lines.}
\end{center}
\end{figure}

In Figure~\ref{fig:convergence}, we plot 
the error between the mean energy functional 
and its minimizer as well as the norm of the 
respective shape gradient for the $K$-th 
iterate versus the number $K$ of iterations 
of the stochastic gradient method. Both expectations
are computed by a quasi-Monte Carlo method using 
1000 samples. Moreover, each particular data point 
reflects the mean of three runs of the stochastic 
gradient method.

One can read from the right plot in Figure~\ref{fig:convergence} 
that the norm of the initial mean gradient is approximately 
70, while after $K = 10\,000$ iterations the norm 
of the mean gradient lies between 0.010 and 0.020, 
depending on the specific run. The cost functional
converges towards the value $E_{\min}\approx 31.856$,
which has been computed by using $K=20\,000$ samples
in the stochastic gradient method, compare the left 
plot in Figure~\ref{fig:convergence}. We observe the 
rate $K^{-1}$ of convergence for the cost functional 
while it is $K^{-1/2}$ for the norm of the gradient. 
These rates are indicated by the dotted lines in 
Figure~\ref{fig:convergence}. Indeed, the rate of
convergence seems to be a bit faster for the first 
few samples in the beginning.

\section{Conclusion}
\label{sec:conclusio}
In the present article, we developed the convergence theory of 
the stochastic gradient method in case of a problem which exhibits
the two-norm discrepency. The two-norm discrepency is a well-known
phenomenon in the optimal control of partial differential equations. 
We considered exemplarily Bernoulli's free boundary problem with 
a random interior boundary which can be seen as a fruit fly of 
a shape optimization problem under uncertainty. We have proven 
the strong convexity of the underlying shape optimization problem 
with respect to the $H^{1/2}$-norm, being weaker than the 
$C^{3,2\alpha}$-regularity required to ensure differentiability. 
Numerical results validate our theoretical findings.

\subsection*{Acknowledgement}
This research has been in part performed while H.H.\ was visiting the 
Laboratory of Mathematics and its Applications of PAU -- UMR CNRS 5142.
The hospitality and the support are gratefully acknowledged.
\bibliography{references}

\begin{thebibliography}{45}
\providecommand{\natexlab}[1]{#1}
\providecommand{\url}[1]{\texttt{#1}}
\expandafter\ifx\csname urlstyle\endcsname\relax
  \providecommand{\doi}[1]{doi: #1}\else
  \providecommand{\doi}{doi: \begingroup \urlstyle{rm}\Url}\fi

\bibitem[Allaire and Dapogny(2014)]{AD14}
G.~Allaire and C.~Dapogny.
\newblock A linearized approach to worst-case design in parametric and
  geometric shape optimization.
\newblock \emph{Math. Models Methods Appl. Sci.}, 24:\penalty0 2199–2257,
  2014.

\bibitem[Allaire and Dapogny(2015)]{AD15}
G.~Allaire and C.~Dapogny.
\newblock A deterministic approximation method in shape optimization under
  random uncertainties.
\newblock \emph{SMAI J. Comput. Math.}, 1:\penalty0 83–143, 2015.

\bibitem[Alt and Caffarelli(1981)]{ACA}
H.~W. Alt and L.~A. Caffarelli.
\newblock Existence and regularity for a minimum problem with free boundary.
\newblock \emph{J. Reine Angew. Math.}, 325:\penalty0 105--144, 1981.

\bibitem[Antunes and Bogosel(2022)]{AntunesBogosel}
P.~R.~S. Antunes and B.~Bogosel.
\newblock Parametric shape optimization using the support function.
\newblock \emph{Comput. Optim. Appl.}, 82\penalty0 (1):\penalty0 107--138,
  2022.
\newblock ISSN 0926-6003,1573-2894.
\newblock \doi{10.1007/s10589-022-00360-4}.
\newblock URL \url{https://doi.org/10.1007/s10589-022-00360-4}.

\bibitem[Barty et~al.(2007)Barty, Roy, and Strugarek]{Barty2007}
K.~Barty, J.-S. Roy, and C.~Strugarek.
\newblock Hilbert-valued perturbed subgradient algorithms.
\newblock \emph{Math. Oper. Res.}, 32\penalty0 (3):\penalty0 551--562, 2007.

\bibitem[Behzadan and Holst(2021)]{BehzadanHolst}
A.~Behzadan and M.~Holst.
\newblock Multiplication in {S}obolev spaces, revisited.
\newblock \emph{Ark. Mat.}, 59\penalty0 (2):\penalty0 275–306, 2021.

\bibitem[Beurling(1957)]{beurling}
A.~Beurling.
\newblock On free boundary problems for the {L}aplace equation.
\newblock \emph{Seminars on Analytic functions, Institute for Advanced Study,
  Princeton, NJ}, 1:\penalty0 248–263, 1957.

\bibitem[Bogosel(2023)]{Bogosel}
B.~Bogosel.
\newblock Numerical shape optimization among convex sets.
\newblock \emph{Appl. Math. Optim.}, 87:\penalty0 1, 2023.
\newblock ISSN 0095-4616,1432-0606.
\newblock \doi{10.1007/s00245-022-09920-w}.
\newblock URL \url{https://doi.org/10.1007/s00245-022-09920-w}.

\bibitem[Boulkhemair(2017)]{Boulkhemair}
A.~Boulkhemair.
\newblock On a shape derivative formula in the {B}runn-{M}inkowski theory.
\newblock \emph{SIAM J. Control Optim.}, 55\penalty0 (1):\penalty0 156--171,
  2017.
\newblock ISSN 0363-0129,1095-7138.
\newblock \doi{10.1137/15M1015844}.
\newblock URL \url{https://doi.org/10.1137/15M1015844}.

\bibitem[Boulkhemair and Chakib(2014)]{BoulkhemairChakib}
A.~Boulkhemair and A.~Chakib.
\newblock On a shape derivative formula with respect to convex domains.
\newblock \emph{J. Convex Anal.}, 21\penalty0 (1):\penalty0 67--87, 2014.
\newblock ISSN 0944-6532,2363-6394.

\bibitem[Brügger et~al.(2020)Brügger, Croce, and Harbrecht]{BCH20}
R.~Brügger, R.~Croce, and H.~Harbrecht.
\newblock Solving a {B}ernoulli type free boundary problem with random
  diffusion.
\newblock \emph{ESAIM Control Optim. Calc. Var.}, 26:\penalty0 56, 2020.

\bibitem[Casas and Tröltzsch(2012)]{Casas2012a}
E.~Casas and F.~Tröltzsch.
\newblock Second order analysis for optimal control problems: {Improving}
  results expected from abstract theory.
\newblock \emph{SIAM J. Optim.}, 22\penalty0 (1):\penalty0 261--279, Jan. 2012.
\newblock ISSN 1052-6234.
\newblock \doi{10.1137/110840406}.
\newblock URL \url{https://epubs.siam.org/doi/abs/10.1137/110840406}.
\newblock Publisher: Society for Industrial and Applied Mathematics.

\bibitem[Conti et~al.(2009)Conti, Held, Pach, Rumpf, and Schultz]{Rumpf2009}
S.~Conti, H.~Held, M.~Pach, M.~Rumpf, and R.~Schultz.
\newblock Shape optimization under uncertainty. a stochastic programming
  perspective.
\newblock \emph{SIAM J. Optim.}, 19\penalty0 (4):\penalty0 1610--1632, 2009.

\bibitem[Culioli and Cohen(1990)]{Culioli1990}
J.-C. Culioli and G.~Cohen.
\newblock Decomposition/coordination algorithms in stochastic optimization.
\newblock \emph{SIAM J. Control Optim.}, 28\penalty0 (6):\penalty0 1372--1403,
  1990.

\bibitem[Dambrine and Karnaev(2024)]{DK24}
M.~Dambrine and V.~Karnaev.
\newblock Robust obstacle reconstruction in an elastic medium.
\newblock \emph{Discrete Continuous Dyn. Syst. Ser. B.}, 29\penalty0
  (1):\penalty0 124--150, 2024.

\bibitem[Dambrine and Lamboley(2019)]{DambrineLamboley}
M.~Dambrine and J.~Lamboley.
\newblock Stability in shape optimization with second variation.
\newblock \emph{J. Differ. Equ.}, 267\penalty0 (5):\penalty0 3009--3045, 2019.
\newblock ISSN 0022-0396,1090-2732.
\newblock \doi{10.1016/j.jde.2019.03.033}.
\newblock URL \url{https://doi.org/10.1016/j.jde.2019.03.033}.

\bibitem[Dambrine et~al.(2015)Dambrine, Dapogny, and Harbrecht]{DDH15}
M.~Dambrine, C.~Dapogny, and H.~Harbrecht.
\newblock Shape optimization for quadratic functionals and states with random
  right-hand sides.
\newblock \emph{SIAM J. Control Optim.}, 53\penalty0 (5):\penalty0 3081–3103,
  2015.

\bibitem[Dambrine et~al.(2017)Dambrine, Harbrecht, Peters, and Puig]{DHPP17}
M.~Dambrine, H.~Harbrecht, M.~Peters, and B.~Puig.
\newblock On {B}ernoulli's free boundary problem with a random boundary.
\newblock \emph{Int. J. Uncertain. Quantif.}, 7\penalty0 (4):\penalty0
  335–353, 2017.

\bibitem[Dambrine et~al.(2019)Dambrine, Harbrecht, and Puig]{DHP19}
M.~Dambrine, H.~Harbrecht, and B.~Puig.
\newblock Incorporating knowledge on the measurement noise in electrical
  impedance tomography.
\newblock \emph{ESAIM Control Optim. Calc. Var.}, 25:\penalty0 84, 2019.

\bibitem[Dambrine et~al.(2023)Dambrine, Harbrecht, and
  Puig]{DambrineHarbrechtPuig}
M.~Dambrine, H.~Harbrecht, and B.~Puig.
\newblock Bernoulli free boundary problems under uncertainty: the convex case.
\newblock \emph{Comput. Methods Appl. Math.}, 23\penalty0 (2):\penalty0
  333--352, 2023.
\newblock ISSN 1609-4840,1609-9389.
\newblock \doi{10.1515/cmam-2022-0038}.
\newblock URL \url{https://doi.org/10.1515/cmam-2022-0038}.

\bibitem[Delfour and Zol{\'e}sio(2011)]{DEZ}
M.~C. Delfour and J.-P. Zol{\'e}sio.
\newblock \emph{Shapes and Geometries: Metrics, Analysis, Differential
  Calculus, and Optimization, Second Edition}.
\newblock Advances in Design and Control. Society for Industrial and Applied
  Mathematics, Philadelphia, 2011.

\bibitem[Eppler and Harbrecht(2006)]{EP06}
K.~Eppler and H.~Harbrecht.
\newblock Efficient treatment of stationary free boundary problems.
\newblock \emph{Appl. Numer. Math.}, 56\penalty0 (10-11):\penalty0 1326–1339,
  2006.

\bibitem[Eppler et~al.(2007)Eppler, Harbrecht, and
  Schneider]{eppler2007convergence}
K.~Eppler, H.~Harbrecht, and R.~Schneider.
\newblock On convergence in elliptic shape optimization.
\newblock \emph{SIAM J. Control Optim.}, 46\penalty0 (1):\penalty0 61--83,
  2007.

\bibitem[Flucher and Rumpf(1997)]{Flucher}
M.~Flucher and M.~Rumpf.
\newblock Bernoulli’s free-boundary problem qualitative theory and numerical
  approximation.
\newblock \emph{J. Reine Angew. Math.}, 486:\penalty0 165–204, 1997.

\bibitem[Friedman(1982)]{Friedman}
A.~Friedman.
\newblock \emph{Variational Principles and Free Boundaries}.
\newblock John Wiley \& Sons Inc., New York, 1982.

\bibitem[Gagliardo(1957)]{Gagliardo}
E.~Gagliardo.
\newblock Caratterizzazioni delle tracce sulla frontiera relative ad alcune
  classi di funzioni in {$n$} variabili.
\newblock \emph{Rend. Sem. Mat. Univ. Padova}, 27:\penalty0 284--305, 1957.
\newblock ISSN 0041-8994.
\newblock URL \url{http://www.numdam.org/item?id=RSMUP_1957__27__284_0}.

\bibitem[Geiersbach and Pflug(2019)]{Geiersbach2019a}
C.~Geiersbach and G.~C. Pflug.
\newblock Projected stochastic gradients for convex constrained problems in
  {Hilbert} spaces.
\newblock \emph{SIAM J. Optim.}, 29\penalty0 (3):\penalty0 2079--2099, 2019.
\newblock \doi{https://doi.org/10.1137/18m1200208}.

\bibitem[Geiersbach and Scarinci(2021)]{Geiersbach2021c}
C.~Geiersbach and T.~Scarinci.
\newblock Stochastic proximal gradient methods for nonconvex problems in
  {Hilbert} spaces.
\newblock \emph{Comput. Optim. Appl.}, 3\penalty0 (78):\penalty0 705--740,
  2021.
\newblock \doi{10.1007/s10589-020-00259-y}.

\bibitem[Geiersbach and Scarinci(2023)]{Geiersbach2023}
C.~Geiersbach and T.~Scarinci.
\newblock A stochastic gradient method for a class of nonlinear
  {PDE}-constrained optimal control problems under uncertainty.
\newblock \emph{J. Differ. Equ.}, 364:\penalty0 635--666, 2023.
\newblock ISSN 0022-0396.
\newblock \doi{10.1016/j.jde.2023.04.034}.
\newblock URL
  \url{https://www.sciencedirect.com/science/article/pii/S0022039623003066}.

\bibitem[Geiersbach and Wollner(2020)]{Geiersbach2020}
C.~Geiersbach and W.~Wollner.
\newblock A stochastic gradient method with mesh refinement for
  {PDE}-constrained optimization under uncertainty.
\newblock \emph{SIAM J. Sci. Comput.}, 42\penalty0 (5):\penalty0 A2750--A2772,
  2020.
\newblock \doi{https://doi.org/10.1137/19m1263297}.

\bibitem[Geiersbach et~al.(2021)Geiersbach, Loayza-Romero, and
  Welker]{Geiersbach2021}
C.~Geiersbach, E.~Loayza-Romero, and K.~Welker.
\newblock Stochastic approximation for optimization in shape spaces.
\newblock \emph{SIAM J. Optim.}, 31\penalty0 (1):\penalty0 348--376, 2021.
\newblock \doi{https://doi.org/10.1137/20M1316111}.

\bibitem[Gilbarg and Trudinger(2001)]{GilbargTrudinger}
D.~Gilbarg and N.~S. Trudinger.
\newblock \emph{Elliptic Partial Differential Equations of Second Order}.
\newblock Classics in Mathematics. Springer, Berlin-Heidelberg, 2001.

\bibitem[Goldstein(1988)]{Goldstein1988}
L.~Goldstein.
\newblock Minimizing noisy functionals in {Hilbert} space: An extension of the
  {Kiefer}-{Wolfowitz} procedure.
\newblock \emph{J. Theor. Probab.}, 1\penalty0 (2), 1988.

\bibitem[Harbrecht and Peters(2017)]{HP17}
H.~Harbrecht and M.~Peters.
\newblock Solution of free boundary problems in the presence of geometric
  uncertainties.
\newblock In M.~B. et~al., editor, \emph{Topological Optimization and Optimal
  Transport in the Applied Sciences}, page 20–39. de Gruyter, Berlin-Bosten,
  2017.

\bibitem[Henrot and Pierre(2005)]{HenrotPierre}
A.~Henrot and M.~Pierre.
\newblock \emph{Variation et optimisation de formes. Une analyse
  géométrique}, volume~48 of \emph{Math\'{e}matiques \& Applications}.
\newblock Springer, Berlin, 2005.
\newblock ISBN 978-3-540-26211-4; 3-540-26211-3.
\newblock \doi{10.1007/3-540-37689-5}.
\newblock URL \url{https://doi.org/10.1007/3-540-37689-5}.

\bibitem[Henrot and Shahgholian(1997)]{HenSha97}
A.~Henrot and H.~Shahgholian.
\newblock Convexity of free boundaries with {B}ernoulli type boundary
  condition.
\newblock \emph{Nonlinear Anal. Theory Methods Appl.}, 28\penalty0
  (5):\penalty0 815--823, 1997.

\bibitem[Ioffe(1979)]{ioffe1979necessary}
A.~D. Ioffe.
\newblock Necessary and sufficient conditions for a local minimum. 3: Second
  order conditions and augmented duality.
\newblock \emph{SIAM J. Control Optim.}, 17\penalty0 (2):\penalty0 266--288,
  1979.

\bibitem[Martin et~al.(2021)Martin, Krumscheid, and
  Nobile]{martin2021complexity}
M.~Martin, S.~Krumscheid, and F.~Nobile.
\newblock Complexity analysis of stochastic gradient methods for
  pde-constrained optimal control problems with uncertain parameters.
\newblock \emph{ESAIM: Math. Model. Numer. Anal.}, 55\penalty0 (4):\penalty0
  1599--1633, 2021.

\bibitem[{Mart{\'i}nez-Frutos} et~al.(2018){Mart{\'i}nez-Frutos},
  {Herrero-P{\'e}rez}, Kessler, and Periago]{MF18}
J.~{Mart{\'i}nez-Frutos}, D.~{Herrero-P{\'e}rez}, M.~Kessler, and F.~Periago.
\newblock Risk-averse structural topology optimization under random fields
  using stochastic expansion methods.
\newblock \emph{Comput. Methods Appl. Mech. Engrg.}, 330:\penalty0 180--206,
  2018.

\bibitem[Nemirovski et~al.(2009)Nemirovski, Juditsky, Lan, and
  Shapiro]{Nemirovski2009}
A.~Nemirovski, A.~Juditsky, G.~Lan, and A.~Shapiro.
\newblock Robust {Stochastic} {Approximation} {Approach} to {Stochastic}
  {Programming}.
\newblock \emph{SIAM J. Optim.}, 19\penalty0 (4):\penalty0 1574--1609, 2009.

\bibitem[Robbins and Monro(1951)]{Robbins1951}
H.~Robbins and S.~Monro.
\newblock A stochastic approximation method.
\newblock \emph{Ann. Math. Statist.}, 22\penalty0 (3):\penalty0 400--407, 1951.
\newblock \doi{10.1214/aoms/1177729586}.

\bibitem[Robbins and Siegmund(1971)]{Robbins1971}
H.~Robbins and D.~Siegmund.
\newblock A convergence theorem for non negative almost supermartingales and
  some applications.
\newblock In \emph{Optimizing {Methods} in {Statistics}}, pages 233--257.
  Academic Press, Cambridge, 1971.

\bibitem[Sokolowski and Zol{\'e}sio(1992)]{SOZ}
J.~Sokolowski and J.-P. Zol{\'e}sio.
\newblock \emph{Introduction to {S}hape {O}ptimization. {S}hape {S}ensitivity
  {A}nalysis}.
\newblock Springer Series in Computational Mathematics. Springer,
  Berlin-Heidelberg, 1992.

\bibitem[Tepper(1975)]{tepper}
D.~E. Tepper.
\newblock On a free boundary problem, the starlike case.
\newblock \emph{SIAM J. Math. Anal.}, 6\penalty0 (3):\penalty0 503–505, 1975.

\bibitem[Yin and Zhu(1990)]{Yin1990}
G.~Yin and Y.~M. Zhu.
\newblock On ${H}$-valued {Robbins}-{Monro} processes.
\newblock \emph{J. Multivar. Anal.}, 34:\penalty0 116--140, 1990.

\end{thebibliography}
\end{document}